\numberwithin{equation}{section}
\newtheorem{thm}{Theorem}[section]
\newtheorem{prop}[thm]{Proposition}
\newtheorem{lem}[thm]{Lemma}
\theoremstyle{definition}\newtheorem{defn}[thm]{Definition}
\theoremstyle{definition}
\theoremstyle{definition}\newtheorem{prob}[thm]{Problem}
\theoremstyle{definition}
\theoremstyle{definition}\newtheorem{lemma}[thm]{Lemma}
\newtheorem{cor}[thm]{Corollary}
\theoremstyle{remark}\newtheorem{rmk}[thm]{Remark}
\newtheorem{eg}{Example}[section]
\DeclareMathOperator \Vol{Vol}
\DeclareMathOperator \Id{Id}
\DeclareMathOperator \tr{tr}
\def\iu {\sqrt{-1}}
\def\pbp {\partial\bar\partial}
\begin{document}
\title{Gromov-Hausdorff limits and Holomorphic isometries}
\author[Claudio Arezzo] {Claudio Arezzo}
\address{ICTP Trieste and Univ. of Parma, arezzo@ictp.it }
\author[Chao Li] {Chao Li}
\address{ICTP Trieste, cli1@ictp.it, leecryst@mail.ustc.edu.cn}
\author[Andrea Loi]{Andrea Loi}
\address{Dipartimento di Matemantica, Universita di Cagliari, loi@unica.it}

\begin{abstract}
The aim of this paper is to study pointed Gromov-Hausdorff Convergence of sequences of K\"ahler submanifolds of a fixed 
K\"ahler ambient space. Our result shows that lower bounds on the scalar curvature imply convergence to a smooth K\"ahler
manifold satisfying the same curvature bounds, and admitting a holomorphic isometry  in the same ambient space.
We then apply this convergence result to prove that 
there are no holomorphic isometries  of a non-compact complete K\"ahler manifold with asymptotically 
non-negative  ones into a finite dimensional
complex projective space endowed with the  Fubini-Study metric.
\end{abstract}

\thanks{The second author is partially supported by NSFC, No. 12001512.}

\maketitle

\tableofcontents
\section{Introduction}

Starting from the seminal work of Gromov (\cite{Grmv81}),  Anderson (\cite{And90}), Cheeger-Colding (\cite{CC97,CC00a,CC00b}) and Cheeger-Colding-Tian (\cite{CCT02}) a great deal of work
has been devoted to understand the structure of Gromov-Hausdorff limits of algebraic varieties with various riemannian bounds typically on Ricci curvature, volumes of geodesic balls and injectivity radii or diameter.
Thanks to groundbreaking results by Donaldson-Sun (\cite{DS14}, \cite{DS17}) and the subsequent work by many authors in various different geometric situations (see, for example, 
Odaka-Spotti-Sun \cite{OSS16}, Spotti-Sun \cite{SS17}, Liu (\cite{LG18}), 
Liu and  Sz\'ekelyhidi \cite{LS22} and Sun \cite{S23}) we have now a much better understanding of which singular spaces can indeed appear in the GH-compactifications of algebraic varieties 
with riemannian bounds and how singularities 
get formed along the degenerations.
The main motivation and the largest part of these works are in fact devoted to  the study of compactifications of moduli of K\"ahler-Einstein manifolds.   On the one hand,  Tian (\cite{Tian90}), Ruan (\cite{R98}), Catlin (\cite{Cat99}) 
and Zeldtich (\cite{Zel98}) proved that 
any polarized metric on a compact K\"ahler manifold is the $\mathcal{C^{\infty}}$-limit of (suitable rescaled) projectively induced metrics in ambient spaces of diverging dimension.
On the other hand, it is conjectured  (see e.g. \cite{LSZ22}) that the Einstein metrics at study are in fact {\em never} induced by the ambient geometry of the projective space $\mathbb CP^N$ with its canonical Fubini-Study metric $\omega_{FS}$
(for the proof of this Conjecture  in some special cases, see for example \cite{Chern67, Smy67, Tsu86}), with the exceptions of 
standard holomorphic  isometric embedding of homogeneous manifolds of positive curvature, i.e. polarized flag manifolds (\cite{arlcomm}).

It was therefore natural to ask whether one could understand the (pointed) GH-limits of sequences of holomorphic submanifolds of a given ambient space (not necessarily the projective space, even though this
would be the main source of interest to us) equipped with the induced metrics. The class of manifolds and immersions analyzed in this paper is the following

\begin{defn}\label{deffond}
Given $n\in\mathbb Z_{>0}$, $R_0\in\mathbb R$ and a compact connected K\"ahler manifold $(\hat M,\hat\omega)$ of dimension $N>n$, we define
$\mathcal K(n, R_0, \hat M,\hat\omega)$ to be the class of $4$-tuple $(M,\omega,p,\phi)$ consisting of:
\begin{itemize}
\item  a smooth complete connected  K\"ahler manifold $(M,\omega)$  of dimension $n$ with scalar curvature $R(\omega)\geq R_0$;
\item  a distinguished point $p\in M$;
\item  a holomorphic isometry $\phi:(M,\omega)\rightarrow (\hat M,\hat\omega)$.
\end{itemize}
\end{defn}

Our first theorem then reads:

\begin{thm}\label{cptness2}
Every sequence $\{(M_i,\omega_i,p_i,\phi_i)\} \subset \mathcal K(n,R_0,\hat M,\hat \omega)$ has a subsequence which converges in the pointed Gromov-Hausdorff sense to a tuple $(M_\infty,\omega_\infty,p_\infty,\phi_\infty)
 \in \mathcal K(n,R_0,\hat M,\hat \omega)$.
\end{thm}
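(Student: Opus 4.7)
My strategy is to turn the theorem into a statement about compactness of Kähler submanifolds of a fixed smooth compact ambient, and then invoke the classical convergence theory for submanifolds with bounded second fundamental form. The crucial input is that a lower bound on the scalar curvature of $M_i$, together with the fixed compact $(\hat M,\hat\omega)$, forces an $L^{\infty}$ bound on the second fundamental form of $\phi_i$. Indeed, because $\phi_i$ is a holomorphic isometric immersion of Kähler manifolds, the Gauss equation for the $(1,1)$-valued second fundamental form gives
\[
 R(\omega_i)\;=\;\mathrm{tr}_{\omega_i}\bigl(\mathrm{Ric}(\hat\omega)|_{M_i}\bigr)\;-\;|II_i|_{\omega_i}^{2}.
\]
Since $\hat M$ is compact, $\mathrm{Ric}(\hat\omega)$ is bounded, and together with $R(\omega_i)\ge R_0$ this yields $\|II_i\|_{L^\infty}\le C$ uniformly in $i$. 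Combined with the boundedness of the full curvature tensor of $\hat\omega$, the Gauss equation now also provides uniform two-sided bounds on the sectional curvature of $(M_i,\omega_i)$.

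\textbf{Local graphical convergence.} After passing to a subsequence, $\phi_i(p_i)\to p_\infty'\in\hat M$ and the tangent $n$-plane $d\phi_i(T_{p_i}M_i)\subset T_{\phi_i(p_i)}\hat M$ converges to a complex $n$-plane $V_\infty\subset T_{p_\infty'}\hat M$. In geodesic normal coordinates of $\hat\omega$ centered at $\phi_i(p_i)$, the image $\phi_i(M_i)$ is locally a graph over its tangent plane with graphing function $h_i$ satisfying $h_i(0)=0$, $Dh_i(0)=0$ and $|D^2h_i|\le\|II_i\|_{L^\infty}\le C$. Since Kähler submanifolds are minimal, the $h_i$ solve a uniformly elliptic quasilinear system with smooth, uniformly bounded coefficients, so Schauder estimates and standard bootstrapping yield uniform $C^{k,\alpha}$ bounds for every $k$. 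Along a further subsequence, the $h_i$ converge in $C^\infty_{\mathrm{loc}}$ to the graphing function $h_\infty$ of a smooth Kähler submanifold $N_\infty\subset\hat M$ passing through $p_\infty'$ with tangent plane $V_\infty$.

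\textbf{Global convergence of the manifolds.} The uniform two-sided sectional curvature bound, together with the uniform local-graph picture above, provides a uniform positive lower bound on the injectivity radius of $(M_i,\omega_i)$ at every point (the local-graph scale controls how long a geodesic can remain minimizing before meeting itself). Cheeger--Gromov compactness for pointed complete manifolds with bounded geometry then yields a pointed smooth Kähler limit $(M_\infty,\omega_\infty,p_\infty)$ and diffeomorphisms $F_{i,k}:B_{M_\infty}(p_\infty,R_k)\to M_i$, for $R_k\nearrow\infty$, such that $F_{i,k}^{\ast}\omega_i\to\omega_\infty$ in $C^\infty_{\mathrm{loc}}$. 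Pointed $C^\infty$ Cheeger--Gromov convergence implies pointed Gromov--Hausdorff convergence, and the limit is complete and $n$-dimensional Kähler. The scalar curvature lower bound $R(\omega_\infty)\ge R_0$ is immediate from smooth convergence.

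\textbf{Recovering the limit immersion.} Consider the maps $\Phi_{i,k}:=\phi_i\circ F_{i,k}:B_{M_\infty}(p_\infty,R_k)\to\hat M$. Each $\Phi_{i,k}$ is a holomorphic isometric immersion into a fixed compact Kähler target with uniformly bounded second fundamental form, so by the local graph argument of the previous step (now parametrised intrinsically on $M_\infty$) and elliptic regularity for holomorphic maps, the $\Phi_{i,k}$ enjoy $C^{k,\alpha}$ bounds independent of $i$. A diagonal Arzelà--Ascoli extraction produces a limit $\phi_\infty:M_\infty\to\hat M$, which is automatically holomorphic (as a $C^\infty$ limit of holomorphic maps) and isometric (as a $C^\infty$ limit of isometric immersions). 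This places $(M_\infty,\omega_\infty,p_\infty,\phi_\infty)$ in $\mathcal K(n,R_0,\hat M,\hat\omega)$.

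\textbf{Main obstacle.} The delicate step is Step~3: upgrading the pointwise $|II_i|$ bound into a uniform positive lower bound on the injectivity radius of the intrinsic metric $\omega_i$. One must rule out ``fast self-wrapping'' of $M_i$ near $p_i$, showing that a short intrinsic loop would force a genuinely short geodesic loop in $\hat M$ through $\phi_i(p_i)$ incompatible with its local graph structure, and hence with the uniform $\|II_i\|_{L^\infty}$ bound. Once this injectivity radius bound is in place, the remaining arguments are routine applications of Cheeger--Gromov compactness and elliptic regularity for the minimal-submanifold/holomorphic system.
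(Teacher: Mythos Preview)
Your proposal is essentially correct but follows a genuinely different route from the paper. Both proofs begin with the same observation: the Gauss equation combined with $R(\omega_i)\ge R_0$ and compactness of $\hat M$ yields a uniform bound $|II_i|\le C$ on the second fundamental form (your displayed identity should carry the tangential double trace of $\hat R$ rather than $\mathrm{tr}_{\omega_i}(\mathrm{Ric}(\hat\omega)|_{M_i})$, but this does not affect the conclusion). From there the paths diverge.

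You proceed by establishing bounded geometry (two-sided curvature bounds plus an injectivity radius lower bound) and then invoke Cheeger--Gromov $C^\infty$ compactness to produce a smooth pointed limit, after which $\phi_\infty$ is recovered as a $C^\infty$ limit of $\phi_i\circ F_{i,k}$. The paper instead first passes to a length-space limit via Gromov precompactness (using only the Ricci lower bound of Lemma~\ref{rclb}), and then upgrades this limit to a smooth K\"ahler manifold \emph{directly}, by constructing explicit holomorphic charts on the limit out of ambient charts (Proposition~\ref{lccs} and Lemma~\ref{lcappr}); the holomorphic isometry $\phi_\infty$ is built simultaneously with the complex structure. In the paper's approach the injectivity radius bound is a byproduct rather than an input, and Cheeger--Gromov compactness is never invoked.

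The ``main obstacle'' you flag (the injectivity radius bound) is real but surmountable: it is exactly what the paper's Proposition~\ref{lccs} delivers, producing on every intrinsic ball $B(q,2r)$ of uniform radius a holomorphic chart bi-Lipschitz to a Euclidean ball --- a Langer-type local-graph lemma --- and the paper explicitly remarks in the Introduction that $\mathrm{inj}\ge i_0>0$ follows. What your approach buys is a familiar architecture relying on standard black boxes (Cheeger--Gromov, Schauder bootstrapping for minimal graphs); what the paper's approach buys is a self-contained construction in which the limit complex structure and the limit immersion are obtained in one stroke from the ambient geometry, avoiding any concern about whether an abstractly produced limit complex structure is compatible with a holomorphic immersion into $(\hat M,\hat\omega)$.
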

Here we say a sequence of tuples $(X_i,d_i,p_i,f_i)$ consisting of a pointed metric space $(X_i,d_i,p_i)$ and a map $f_i$ from $X_i$ into a fixed (pointed) metric space converges if both the sequence of spaces $\{(X_i,d_i,p_i)\}$ and the sequence of maps $\{f_i\}$ converges.

A few comments are due.
\begin{itemize}
\item
The assumption about completeness is justified also in the classical Einstein case by the following extension Theorem of Hulin (see also the forthcoming paper \cite{Li23} 
by the second author for various generalizations):
\begin{prop}[\cite{Hul96}]\label{Hulext}
Let $V$ be a connected complex submanifold of $(\mathbb CP^N,\omega_{FS})$ such that the induced K\"ahler metric on $V$ is Einstein. Then $V$ can be extended to a connected injectively immersed complex submanifold $M$ of the same dimension of $V$, such that the induced K\"ahler  (Einstein) metric on $M$ is complete.
\end{prop}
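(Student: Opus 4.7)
The plan is to construct the extension $M$ via analytic continuation of $V$ as a germ of Kähler-Einstein submanifold in $(\mathbb{CP}^N,\omega_{FS})$, enabled by an a priori pointwise bound on the second fundamental form that the Einstein hypothesis provides for free. The first step is to establish this bound via Gauss-Codazzi: for a complex $n$-dimensional submanifold $V$ of the complex space form $(\mathbb{CP}^N,\omega_{FS})$ of constant holomorphic sectional curvature $c$, the Gauss equation reads
\[
\operatorname{Ric}_V \;=\; c(n+1)\,g_V \;-\; Q(A),
\]
where $A$ is the second fundamental form of $V$ in $\mathbb{CP}^N$ and $Q(A)$ is a non-negative trace-type quadratic expression in $A$. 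Tracing and using the hypothesis $\operatorname{Ric}_V=\lambda\,g_V$ yields $|A|^2 = n(n+1)c - n\lambda$, which is \emph{constant}. Hence $|A|$ is uniformly bounded on $V$.

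The second step uses this bound to extend $V$ across boundary points in $\mathbb{CP}^N$. Given $q \in \overline{V}\setminus V$ and a sequence $p_k \in V$ with image converging to $q$, one works in a geodesic ball at $q$ of radius smaller than the injectivity radius of $\omega_{FS}$ and than $1/\sup|A|$. By a Langer-type graphical lemma, a fixed-size piece of $V$ near each $p_k$ can be written as the graph over $T_{p_k}V$ of a holomorphic-valued function with uniform $C^{1,\alpha}$-norm. Complex submanifolds are calibrated, hence minimal, so the minimal-submanifold system on the graphing function is elliptic, and the Einstein equation on the induced metric contributes an additional quasilinear elliptic condition of the same order. Bootstrap elliptic regularity promotes $C^{1,\alpha}$ estimates to $C^{\infty}$ estimates; Arzel\`a-Ascoli then extracts a subsequential smooth limit, which is a Kähler-Einstein complex $n$-submanifold through $q$ extending $V$ on an open set.

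To globalize, the third step introduces the étalé space $\mathcal G$ of germs at points of $\mathbb{CP}^N$ of Kähler-Einstein complex $n$-submanifolds of $(\mathbb{CP}^N,\omega_{FS})$ with Einstein constant $\lambda$, carrying the natural topology making the base-point projection $\pi\colon \mathcal G \to \mathbb{CP}^N$ a local homeomorphism. Define $M$ as the connected component of $\mathcal G$ containing the germs of $V$; then $V$ includes in $M$ as an open subset, $\phi := \pi|_M \colon M \to \mathbb{CP}^N$ is a holomorphic immersion extending $V$, and $\phi^*\omega_{FS}$ is Kähler-Einstein on $M$. Completeness of $\phi^*\omega_{FS}$ follows from Step 2: a Cauchy sequence in $M$ has image with a subsequential limit $q\in\mathbb{CP}^N$ by compactness of the ambient, and the germ at $q$ produced by Step 2 must already lie in $M$ by maximality of the connected component.

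The main obstacle is the \emph{injectivity} of $\phi$. Two distinct points of $M$ with common image $q \in \mathbb{CP}^N$ correspond to distinct germs at $q$. If their tangent spaces at $q$ agree, then both germs are holomorphic graphs over the same tangent plane solving the same Einstein system, hence coincide by unique continuation, contradicting distinctness in $\mathcal G$. If the tangent spaces differ, one faces a genuine transverse self-intersection, and $\phi$ has to be made injective by passing to an appropriate open dense connected subset of $M$, excising the proper analytic self-intersection locus while invoking the rigidity of Kähler-Einstein submanifolds of $(\mathbb{CP}^N,\omega_{FS})$ to check that the surgery preserves both completeness and the Einstein condition. Executing this last analysis carefully, rather than the preceding Gauss-Codazzi plus elliptic bootstrap, is where the substance of the proposition lies.
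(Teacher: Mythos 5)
The paper does not prove this statement: it is quoted directly from Hulin's paper \cite{Hul96}, so there is no internal argument to compare yours against. Judged on its own terms, your Step 1 is correct and is indeed the decisive geometric input: the Gauss equation in a complex space form forces $|A|^2$ to be constant on an Einstein submanifold, and this uniform bound is what makes extension across boundary points, and hence completeness of the maximal continuation, possible. The graphical-limit argument of Step 2 and the \'etal\'e-space globalization of Step 3 are a reasonable way to organize the analytic continuation; for complex submanifolds you could bypass the Langer/minimal-surface machinery entirely, since bounded $|A|$ bounds the second derivatives of the holomorphic graphing functions and Cauchy estimates plus Montel then give the smooth subsequential limits you need.

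The genuine gap is the injectivity step, which is exactly the nontrivial part of the conclusion, and both halves of your treatment of it fail. For tangential self-intersections, unique continuation does not apply: two solutions of a second-order elliptic system sharing a $1$-jet at a point need not coincide --- one needs agreement to infinite order or on an open set. Concretely, two distinct smooth conics in $\mathbb CP^2$ tangent at a point are each a Veronese image of $\mathbb CP^1$, hence K\"ahler--Einstein with the same constant, and they define two distinct Einstein germs at the same point with the same tangent plane; so the general claim underlying your tangential case is false, and ruling out tangencies within a single connected component requires a different argument. For transverse self-intersections, excising the self-intersection locus cannot work: removing a proper analytic subset from $M$ destroys completeness, so the ``surgery'' is incompatible with the very conclusion you are trying to reach. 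What is required is a proof that the maximal continuation is injective to begin with; this is where the real content of Hulin's proposition lies (her argument exploits the rigidity of holomorphic isometric immersions into $\mathbb CP^N$ in the spirit of Calabi's diastasis theory, not PDE unique continuation), and your proposal, as you yourself concede in the last sentence, does not supply it.
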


\item
Some conditions are not preserved under the convergence. For example, it is of course well known that even in the case when all $M_i$ are compact, $M_\infty$ does not have to, as shown by all examples of bubbling
known (see \cite{And90} and \cite{S23} for a recent account). See also the explicit example described at the end of this Introduction.

\item
Theorem \ref{cptness2} is in fact a simplified version of the most general result 
 that we obtain (Theorem \ref{cptness1} below in the final Section \ref{finrem} and  which allows the ambient space to be complete and noncompact, and relaxed the assumption
on the lower bound on the scalar curvatures of the $\{(M_i,\omega_i)\}$.

\item
It is interesting to compare this result with the previously regularity results mentioned above. In fact, we are able to prove that under the condition in  Theorem \ref{cptness2}, we can find  positive constants $C, i_0$ and $v$ such that
\begin{enumerate}
\item $|\mathrm{Rm}(\omega_i)|\leq C$  where $\mathrm{Rm}$ denotes the Riemann tensor;
\item The injective radius at any $q\in M_i$ is no less than $i_0$;
\item $\Vol (B(q,1))\geq v$ for any $i$ and $q\in M_i$.
\end{enumerate}
The first estimate can be easily obtained by observing the proof of Lemma \ref{rclb}, while the others follow from Proposition \ref{lccs}.
By works of Gromov (\cite[$\S$ 8. D]{Grmv81}, which was added in the second edition) and Liu (\cite{LG18}), the limit space admits the structure of a $\mathcal C^{1,\alpha}$, for any $\alpha<1$, Riemannian manifold 
(see \cite[Example 5]{Pet87} for an example) and the structure of a complex analytic space respectively. However, this result as well as other regularity results mentioned above are not used in our proof as, for example, we don't know a priori whether the complex structure found by Liu also admits an holomorphic isometry into $(\hat M,\hat\omega)$. 

\item
Donaldson and Sun (\cite{DS14}) showed that, given constants $\kappa,V>0$, the limit space $M_\infty$  of a sequence of compact polarized K\"ahler $n$-manifolds $(M_i,L_i,\omega_i)$ satisfying
\begin{enumerate}
\item $\omega_i\in 2\pi c_1(L_i)$ and $-\omega_i\leq Ric(\omega_i)\leq\omega_i$;
\item $\Vol(B(q,r))>\kappa r^{2n}$ for any $q\in M_i$ and $r>0$;
\item $\Vol(M_i,\omega_i)=V$.
\end{enumerate}
admits a natural  structure of a normal projective variety in $\mathbb CP^N$ for some $N$ depending only on $n,\kappa$ and $M_\infty$. In fact in their proof, they showed that for some sufficiently big $k$, all $M_i$ can be holomorphically  embedded (but not in general isometrically of course)  into some linear subspace of $\mathbb CP^{N_k}$ using the Kodaira embedding maps $\phi_{i,k}$ associated with $H^0(M_i,L_i^k)$, 
and the images converges to a normal subvariety  in $\mathbb CP^{N_k}$ homeomorphic to $M_\infty$.  
As a byproduct of this analysis and our result, one can deduce that every time $M_\infty$ is indeed singular there is no $k\geq 1$ and $R_0\in\mathbb R$ such that  for every $i$, $R(\phi_{i,k}^*\omega_{FS})\geq R_0$, 
indicating the failure of a uniform approximation of Einstein metrics \`a la Tian for a family converging to a singular space.
\end{itemize}

The main applications of our main result concern the study of the following
\begin{prob}\label{conj0}
Let $(M,\omega)$ be a K\"ahler submanifold of $(\mathbb CP^N,\omega_{FS})$ with $N$ finite. If $Ric(\omega)=2\lambda\omega$ for some $\lambda\in\mathbb R$, then $\lambda>0$.
\end{prob}
Problem \ref{conj0} is purely local. However, according to above mentioned extension result by Hulin, we can assume that $(M,\omega)$ is a complete K\"ahler manifold which admits an injective holomorphic isometry $\phi:(M,\omega)\rightarrow\mathbf  (\mathbb CP^N, \omega_{FS})$.

For the case of low codimension, certain local theories provide a positive answer to Problem  \ref{conj0} (see for example \cite{Chern67, Smy67, Tsu86}), while, if $M$ is compact, Hulin \cite{Hul00} found a global argument to prove $\lambda > 0$). However, when the codimension is large or the manifold is non-compact, the same techniques have proved to be powerless.

We will give various applications of Theorem \ref{cptness2} with various curvature assumptions (much milder than the Einstein requirement) towards a solution to the above Problem (for $\lambda=0$). To state our results precisely we set

\begin{defn}
 A non-compact complete connected Riemannian manifold $(M,g)$ has {\em asymptotically non-negative Ricci curvature}, if for any $\varepsilon>0$, $Ric(g)\geq -\varepsilon g$ on $M\!\setminus\! E_\varepsilon$ for some compact set $E_\varepsilon\subset M$. Equivalently for some (hence every) $p\in M$, there exists a non-decreasing function $\kappa:\mathbb R\rightarrow (-\infty,0]$ with $\lim _{t\rightarrow \infty} \kappa(t)=0$, such that
\begin{equation}
Ric(g)\geq (\kappa\circ\rho) g,
\end{equation}
where $\rho$ is the distance function to $p$ on $M$.
\end{defn}

This class of manifolds broadens the class with non-negative Ricci curvature and contains for example all Asymptotically Locally Euclidean spaces of order greater than $2$, which have been
largely studied in connections with various fundamental questions in Geometry and Mathematical Physics (see e.g. \cite{SY}, \cite{LB}, \cite{Kr}, \cite{arpa} and \cite{HLB} for a very limited sample).

\begin{cor}\label{euplane}
Let
$\phi:(M,\omega)\rightarrow (\hat M,\hat\omega)$ be 
a holomorphic isometry from a non-compact complete connected K\"ahler manifold with asymptotically non-negative Ricci curvature.  into a compact K\"ahler manifold $(\hat M,\hat\omega)$.
Then there is a holomorphic isometry from the complex plane into $(\hat M,\hat \omega)$.
\end{cor}
A  well-known example of asymptotically flat  non-compact complex K\"ahler manifold (and hence of  asymptotically non-negative Ricci curvature) is the Burns-Simanca metric $g_{BS}$ on the blow-up 
$\tilde {\mathbb C}^2$ of  ${\mathbb C}^2$ at the origin. In
 \cite[Th. 1]{LOIMOSSAmed} it is proven that there exists a holomorphic isometry
of the complex plane into $(\tilde {\mathbb C}^2, g_{BS})$. 
In view of Corollary \ref{euplane} it could be interesting to see if
there exists a holomorphic isometry of $(\tilde {\mathbb C}^2, g_{BS})$
into some compact  K\"ahler manifold. The only result we know in this direction
is that $(\tilde {\mathbb C}^2, g_{BS})$ cannot admit a holomorphic isometry into
a complex projective space (see \cite{CAL19}).

The strategy of the proof of the above Corollary is inspired by a Claim by Hulin in \cite{Hul96}, according to which if $(M,\omega)$ is complete and Ricci-flat, then its universal cover would actually decompose
as a product of a complex line and a simply connected manifold, as an application of the well known Cheeger-Gromoll  splitting theorem. Unfortunately this Claim turns out to be false in the case that $(M,\omega)$ is non-compact, as the example of the Eguchi-Hanson 
space $T^*S^2$ easily shows. Our idea is then to correct this ill-based hope (and to generalize it to non-necessarily Ricci-flat manifolds) to show that, given a holomorphic isometry 
$\phi:(M,\omega)\rightarrow (\hat M,\hat\omega)$ with $(M,\omega)$ non-compact complete K\"ahler manifold with (asymptotically) non-negative Ricci curvature, we can construct a {\em new} smooth complete manifold with the same curvature bounds and  still 
holomorphic isometrically immersed in the same ambient space as a specific pointed Gromov-Hausdorff limit of well chosen pointed spaces $(M, \omega,p_i)$ converging to $(M_{\infty},\omega_\infty, p_{\infty})$.
By choosing the points $p_i$ carefully, we will show that $(M_{\infty}, \omega_{\infty})$ does indeed  have non-negative Ricci curvature, contains a line and hence its universal covering splits a complex line by Cheeger-Gromoll Theorem. By Theorem \ref{cptness2} this complex line 
is still  holomorphically isometrically immersed in the same ambient space as claimed.

To illustrate explicitly this phenomenon let us consider $M=\mathbb C-\{0\}$ and $\omega=\frac{\iu}{2}\pbp (|z|^2+|z|^{-2})$. One can easily check that  $(M,\omega)$ is complete and has exact two asymptotically Euclidean flat ends.
Clearly $(M,\omega)$ can be holomorphically isometrically embedded in $(\mathbb C^2,\omega_{eucl})$, which in turn maps (by quotient) holomorphically isometrically  onto a complex torus $(\hat M,\hat \omega)$ with a flat metric.
Hence we get a holomorphic isometry $\phi:(M,\omega)\rightarrow (\hat M,\hat \omega)$. Clearly $\{(M,\omega_i,2^i)\}$ converges to $(\mathbb C,\omega_{eucl},0)$ in the pointed Gromov-Hausdorff sense, and this line still maps into the flat torus.

Corollary \ref{euplane} provides a strong obstruction for the existence of holomorphic isometry  for complete non-compact manifolds of non-negative curvature, as it is
well-known that  various spaces do not contain an isometric copy of the complex plane. The most classical of these ambient spaces is in fact the (finite dimensional) projective space $(\mathbb CP^N,\omega_{FS})$
as proved by Calabi in 1953  (\cite[Theorem 13]{Cal53}). Another class of examples are the generalized flag manifold of classical type (see \cite{loimossarighom}).

The following is then an immediate corollary of this discussion:

\begin{cor}\label{nopirf}
There are no holomorphic isometries of non-compact complete K\"ahler manifolds with non-negative Ricci curvature or with asymptotically non-negative Ricci curvature into either  complex finite dimensional projective spaces endowed with Fubini-Study metrics or generalized flag manifolds of classical type.
In particular there is no Ricci-flat non-compact  K\"ahler submanifold of any $(\mathbb CP^N,\omega_{FS})$ with $N$ finite.
\end{cor}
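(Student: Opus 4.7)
The plan is to deduce Corollary \ref{nopirf} directly from Corollary \ref{euplane}, combined with classical non-embedding results for the Euclidean complex line. The structure is a simple contradiction argument, so the ``work'' has all been concentrated in the previous corollary.

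First, I would argue by contradiction. Suppose $\phi:(M,\omega)\to(\hat M,\hat\omega)$ is a holomorphic isometry where $(M,\omega)$ is a non-compact complete connected K\"ahler manifold with either nonnegative or asymptotically nonnegative Ricci curvature, and $(\hat M,\hat\omega)$ is either $(\mathbb CP^N,\omega_{FS})$ for some finite $N$, or a generalized flag manifold of classical type. The crucial observation is that both candidate target spaces are compact K\"ahler manifolds, so the hypotheses of Corollary \ref{euplane} are met verbatim. Applying that corollary thus yields a holomorphic isometry $\psi:(\mathbb C,\omega_{eucl})\to(\hat M,\hat\omega)$.

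Next, I would invoke the relevant non-existence theorems for holomorphic isometries from the complex Euclidean line. For $(\hat M,\hat\omega)=(\mathbb CP^N,\omega_{FS})$ with $N<\infty$, Calabi's 1953 rigidity theorem (\cite[Theorem 13]{Cal53}) asserts that no such $\psi$ exists, because the diastasis function of the flat metric on $\mathbb C$ is not a polynomial and hence cannot be realized as the pull-back of the diastasis of any finite-dimensional projective space. For $(\hat M,\hat\omega)$ a generalized flag manifold of classical type, the same obstruction is supplied by \cite{loimossarighom}. Either way, we reach a contradiction with the existence of $\psi$, proving the first assertion.

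Finally, for the ``in particular'' statement, I would just note that Ricci-flatness ($Ric(\omega)=0$) is a special case of nonnegative Ricci curvature, so any Ricci-flat K\"ahler submanifold of $(\mathbb CP^N,\omega_{FS})$ would in particular be a holomorphic isometric immersion of a complete non-compact K\"ahler manifold (by Hulin's Proposition \ref{Hulext}) with nonnegative Ricci curvature into $(\mathbb CP^N,\omega_{FS})$, which is ruled out by the first assertion.

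The only subtle point — really the only thing to be careful about — is the appeal to Hulin's extension theorem in the Ricci-flat case, to ensure that we may assume completeness of $(M,\omega)$ before invoking Corollary \ref{euplane}; but this is already recorded as Proposition \ref{Hulext}. There is no genuine obstacle here: all the analytic work (the Gromov-Hausdorff compactness, the construction of a splitting line via Cheeger-Gromoll, and the propagation of the holomorphic isometry to the limit) has been absorbed into Corollary \ref{euplane}, and what remains is purely a citation of Calabi's classical obstruction.
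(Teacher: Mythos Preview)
Your proposal is correct and follows precisely the route the paper takes: the paper states Corollary \ref{nopirf} as ``an immediate corollary'' of Corollary \ref{euplane} together with Calabi's obstruction \cite[Theorem 13]{Cal53} for $(\mathbb CP^N,\omega_{FS})$ and the analogous result \cite{loimossarighom} for generalized flag manifolds of classical type.

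One small point on the ``in particular'' clause: Hulin's Proposition \ref{Hulext} only guarantees that the extension is \emph{complete}, not that it is non-compact. You should add a sentence ruling out the compact case, e.g.\ by citing Hulin's result \cite{Hul00} that a compact K\"ahler--Einstein submanifold of $(\mathbb CP^N,\omega_{FS})$ necessarily has positive Einstein constant, which is incompatible with Ricci-flatness.
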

Notice that in the last sentence of the theorem we do not need to  assume
the manifold to be complete by the above mentioned Hulin's extension theorem.

This corollary  proves affirmatively Conjecture 1 in \cite{LSZ18} for finite dimensional ambient spaces and moreover gives a unified proof  (when the ambient projective space is finite dimensional) of various case by case analysis as for example in 
\cite{LZ16, C.A.19, LZZ21, Zed21}.

Clearly a holomorphic isometry 
of non-compact complete K\"ahler manifold into a compact 
 K\"ahler  one cannot be an embedding. Nevertheless 
one can easily construct 
non-compact complete K\"ahler manifolds admitting holomorphic isometric injective immersions into complex finite dimensional projective spaces (see, e.g. Example 6.4 in \cite{loiplacinizedda}). This shows that the assumption on the Ricci curvature in Corollary 
\ref{nopirf} is necessary.

Our results have applications also in other directions. For example, since it is well-known  that an isometry between two  irreducible non Ricci flat K\"ahler manifolds  is either holomorphic or antiholomorphic  (see, e.g \cite{placiniweakly} for a proof)
the last part of Corollary \ref{nopirf} combined with the de Rham decomposition Theorem  yields  the following appealing 
result (to the best of  authors' knowledge this result is new).

\begin{cor}
Let $(M_1, g_1)$ and $(M_2, g_2)$ be two  K\"ahler manifolds. Assume that there exists an isometry between them and $(M_1, g_1)$ admits a holomorphic isometry into a finite dimensional complex projective space. Then $(M_2, g_2)$ also admits a holomorphic isometry into the same finite dimensional complex projective space. 
If additionally $(M_1, g_1)$ is irreducible, then it is biholomorphically isometric to $(M_2, g_2)$ with its original complex structure or its opposite one.
\end{cor}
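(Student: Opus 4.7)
\emph{Irreducible case (settling both parts).} The strategy is to handle the irreducible case directly using the quoted result that an isometry between two irreducible non Ricci flat K\"ahler manifolds is holomorphic or antiholomorphic, and then to reduce the general case to this via the de Rham decomposition. Assume $(M_1,g_1)$ is irreducible: the last part of Corollary \ref{nopirf} forbids it to be Ricci-flat, and the Riemannian-isometric manifold $(M_2,g_2)$ is therefore also an irreducible non-Ricci-flat K\"ahler manifold, so by the quoted result $f$ is either holomorphic or antiholomorphic. If $f$ is holomorphic, then $\phi\circ f^{-1}:M_2\to\mathbb{CP}^N$ is a holomorphic isometry and $f$ itself realises the biholomorphic isometry with the original complex structure $J_2$. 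If $f$ is antiholomorphic, let $\sigma:\mathbb{CP}^N\to\mathbb{CP}^N$ denote the standard complex conjugation, an antiholomorphic isometry of $\omega_{FS}$: then $\sigma\circ\phi\circ f^{-1}:M_2\to\mathbb{CP}^N$ is a holomorphic isometry, while $f$ provides the biholomorphic isometry $(M_1,g_1,J_1)\to(M_2,g_2,-J_2)$, giving the opposite-complex-structure alternative.

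\emph{General case.} Lift to universal covers: $\tilde f:\tilde M_1\to\tilde M_2$ is a Riemannian isometry and $\tilde\phi=\phi\circ\pi:\tilde M_1\to\mathbb{CP}^N$ is a K\"ahler immersion. The K\"ahler de Rham decomposition writes $\tilde M_1=\prod_i(N_i,g_i,J_i)$ as an irreducible K\"ahler factorisation, with each factor a totally geodesic K\"ahler submanifold and hence carrying an induced K\"ahler immersion $\tilde\phi_i:N_i\to\mathbb{CP}^{N_i}$. The last part of Corollary \ref{nopirf} then forbids Ricci-flat factors (in particular any flat $\mathbb{C}^k$ factor). By the uniqueness of de Rham, $\tilde M_2$ decomposes compatibly, and $\tilde f$ restricts (modulo permutation of isometric factors) to isometries $\tilde f_i:N_i\to N_i'$, each holomorphic or antiholomorphic by the quoted result. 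Using the classical (Calabi-type) theorem that any K\"ahler immersion of a product of K\"ahler manifolds into $\mathbb{CP}^N$ factors, up to an isometry of $\mathbb{CP}^N$, through the Segre embedding of factor-wise immersions, one assembles a holomorphic isometric immersion $\tilde\Psi:\tilde M_2\to\mathbb{CP}^M$ as the Segre composition of $\tilde\phi_i\circ\tilde f_i^{-1}$, pre-composing with the complex conjugation $\sigma_i:\mathbb{CP}^{N_i}\to\mathbb{CP}^{N_i}$ on those factors where $\tilde f_i$ is antiholomorphic.

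\emph{Main obstacle.} The hardest step is descending $\tilde\Psi$ from $\tilde M_2$ to $M_2$, i.e.\ verifying invariance under deck transformations of $\pi_1(M_2)$. Via the equivariance $\tilde f\circ\gamma=f_*(\gamma)\circ\tilde f$, this reduces to $\pi_1(M_1)$-invariance of the assembled map on $\tilde M_1$. Since the deck group preserves both K\"ahler structures on $\tilde M_1$ (the original $\tilde J_1$ and the pulled-back $\tilde J_2':=\tilde f^*J_2$), it respects the de Rham decomposition, and in the generic case the $\pi_1(M_1)$-invariance of $\tilde\phi=\mathrm{Segre}\circ\prod_i\tilde\phi_i$ transfers to the modified map through the insertion of the fixed $\sigma_i$. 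The genuinely subtle point, when deck transformations permute K\"ahler-isometric factors, requires a separate analysis exploiting the rigidity of the factor immersions.
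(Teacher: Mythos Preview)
Your approach aligns with the paper's (very brief) sketch: the paper only says the corollary follows from combining the last part of Corollary~\ref{nopirf} (no Ricci-flat K\"ahler submanifolds of $\mathbb{CP}^N$) with the de Rham decomposition and the cited fact that isometries between irreducible non-Ricci-flat K\"ahler manifolds are holomorphic or antiholomorphic. You invoke exactly these ingredients and supply considerably more detail than the paper does. The irreducible case is handled correctly and completely.

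For the general case, however, you explicitly flag the descent from $\tilde M_2$ to $M_2$ as an unresolved ``main obstacle'' and leave the permuted-factor situation to ``a separate analysis'', so the argument as written is incomplete. This gap is fillable, and more cleanly than your outline suggests. The key simplification is to group the irreducible de Rham factors of $\tilde M_1$ not individually but by \emph{sign}: set $D^\pm=\ker(\tilde J_1'\mp\tilde J_1)$, where $\tilde J_1'=\tilde f^*\tilde J_2$, giving $\tilde M_1=\tilde M_1^+\times\tilde M_1^-$ with $\tilde J_1'=\tilde J_1$ on the first factor and $\tilde J_1'=-\tilde J_1$ on the second. Since both $J_1$ and $f^*J_2$ are globally defined on $M_1$, every deck transformation preserves $D^+$ and $D^-$ separately, hence splits as $\gamma=\gamma^+\times\gamma^-$; your worry about permutations of isometric irreducible factors evaporates at this coarser two-block level. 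Writing $\tilde\phi$ (up to a unitary of $\mathbb{CP}^N$) as $S\circ(\phi^+\times\phi^-)$ via the two-factor Calabi/Segre factorisation, the invariance $\tilde\phi\circ\gamma=\tilde\phi$ together with Calabi rigidity on each block forces $\phi^\pm\circ\gamma^\pm=\phi^\pm$ projectively, whence the modified map $\tilde\phi'=S\circ\bigl(\phi^+\times(\sigma\circ\phi^-)\bigr)$ is also $\pi_1(M_1)$-invariant and descends. Transporting by $\tilde f$ gives the required map on $M_2$.

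Two minor points. First, you write the target as $\mathbb{CP}^M$, but reassembling through the same Segre skeleton lands back in the same $\mathbb{CP}^N$, as the statement demands. Second, both the global de Rham theorem and the passage to universal covers tacitly assume completeness, which the corollary does not state; the paper is equally silent on this.
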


The paper  is organized as follows. In Section \ref{prel}
we recall basic results on Gromov-Hausdorff convergence.
In   Section \ref{chcm} we prove technical results needed in the proof
of Theorems  \ref{cptness2} and \ref{cptness1}; in particular  we 
show the existence 
of an adapted atlas of charts for a holomorphic isometry between 
K\"ahler manifolds
(see Proposition \ref{lccs}).
Sections \ref{secfour}, \ref{secfive} and \ref{finrem} are dedicated to the proofs of Theorem \ref{cptness2}, Corollary \ref{euplane}
 and Theorem \ref{cptness1},  respectively.

\section{Preliminaries}\label{prel}

\subsection{Conventions}
For any metric space $(X,d)$, $p\in X$ and $r>0$, we define
\begin{equation}
B(p,r)=\{x\in X|d(x,p)<r\},\qquad \bar B(p,r)=\{x\in X|d(x,p)\leq r\}.
\end{equation}
When we want to specify the metric $d$, we use notations like $B_d(p,r)$ nad $\bar B_d(p,r)$. For nonempty $S\subset X$, we also use similar notations $B(S,r)$ and $\bar B(S,r)$.

Especially we denote for any $k\in\mathbb Z_{>0}$ and $\delta>0$
\begin{equation}
B^k_{\delta}=\{x\in\mathbb C^k||x|<\delta\},\qquad \bar B^k_{\delta}=\{x\in\mathbb C^k||x|\leq\delta\}.
\end{equation}

\subsection{Pointed Gromov-Hausdorff convergence}

In the following  all metric spaces that we consider are  boundedly compact, namely all closed bounded subsets  are compact.

First we recall the pointed Gromov-Hausdorff convergence of metric spaces. A pointed metric space is a pair $(X,p)$ consisting of a metric space $X$ and a point $p\in X$. An isometry $f:(X_1,p_1)\rightarrow (X_2,p_2)$ between two pointed metric spaces is an isometry $f:X_1\rightarrow X_2$  with $f(p_1)=p_2$.

The pointed Gromov-Hausdorff convergence can be defined (see e.g. \cite{BBI01}) by means of approximate maps as

\begin{defn}\label{GHspace}
Let $(X_i,p_i)$ ($i=1,2,\cdots$)  and  $(X_\infty,p_\infty)$ be pointed metric spaces. We say that $(X_i,p_i)$  converges to $(X_\infty,p_\infty)$ in the pointed Gromov-Hausdorff sense, if there exists a sequence  $\{\Phi_i\}$ with $\Phi_i:X_\infty\rightarrow X_i$ and  $\Phi_i(p_\infty)=p_i$  satisfying the condition:
\begin{enumerate}[label=(\alph*)]
\item 
for any $r>0$ and $\varepsilon>0$, when $i$ is sufficiently large, $B(\Phi(\bar B(p_\infty,r),\varepsilon)\supset \bar B(p_i,r-\varepsilon)$ and for any $x_1,x_2\in \bar B(p_\infty,r)$
\begin{equation}
|d(\Phi(x_1),\Phi(x_2))-d(x_1,x_2)|<\varepsilon.
\end{equation}
\end{enumerate}
or equivalently there exists a sequence  $\{\Psi_i\}$   with 
$\Psi_i:X_i\rightarrow X_\infty$ and $\Psi(p_i)=p_\infty$ satisfying the condition
\begin{enumerate}[label=(\alph*)]\setcounter{enumi}{1}
\item 
 for any $r>0$ and $\varepsilon>0$,  when $i$ is sufficiently large, $B(\Phi(\bar B(p_i,r),\varepsilon)\supset \bar B(p_\infty,r-\varepsilon)$ and for any $x_1,x_2\in \bar B(p_i,r)$
\begin{equation}
|d(\Psi(x_1),\Psi(x_2))-d(x_1,x_2)|<\varepsilon.
\end{equation}
\end{enumerate}
\end{defn}

In the study of structure of the limit space of a sequence of pointed metric spaces converging in the pointed Gromov-Hausdorff sense, one frequently uses  the idea to construct maps from or into the limit space via finding partial limits of a sequence of maps $\{f_i\}$ of one of the following types
\begin{enumerate}[label= (\Alph*)]
\item $f_i:(X_i,p_i)\rightarrow (Y,q)$;
\item $f_i:(X,p)\rightarrow (Y_i,q_i)$;
\item $f_i:(X_i,p_i)\rightarrow (Y_i,q_i)$.
\end{enumerate}
where $(X,p)$, $(Y,q)$ are  pointed metric spaces, and $\{(X_i,p_i)\}$, $\{(Y_i,q_i)\}$ are sequences of pointed metric spaces which converge in the Gromov-Hausdorff sense to $(X,p)$ and $(Y,q)$ respectively.

In many cases, the sequences involved are approximately Lipschitz in the following sense

\begin{defn}
Let $K\geq 0$ be a constant. We say a sequence $\{f_i\}$ of type (A), (B) or (C) is approximately $K$-Lipschitz  if
for every $r>0$ and $\varepsilon>0$, when $i$ is sufficiently large, it holds that for any $x_1,x_2\in  \bar B(p_i,r)$ (or $x_1,x_2\in \bar B(p,r)$ if $\{f_i\}$ has type (B))
\begin{equation}
d(f_i(x_1),f_i(x_2))\leq Kd(x_1,x_2)+\varepsilon.
\end{equation}
\end{defn}

For approximately Lipschitz sequences, we can consider the following convergence

\begin{defn}\label{GHmap}
Let $\{f_i\}$ be an approximately $K$-Lipschitz  sequence of type (A), (B) or (C) with $K\geq 0$ and $f_\infty:(X,p)\rightarrow (Y,q)$.  We say $\{f_i\}$ converges in the pointed Gromov-Hausdorff sense to $f_{\infty}$ if there exist sequences $\{\Phi_i\}$ and $\{\Psi_i\}$ of maps such that
\begin{enumerate}
\item $\Phi_i=\Id_X$ if $\{f_i\}$ has type (B) or else $\Phi_i:X\rightarrow X_i$ satisfies Condition (a) in Definition \ref{GHspace};
\item $\Psi_i=\Id_Y$ if $\{f_i\}$ has type (A) or else $\Psi_i:Y_i\rightarrow Y$ satisfies Condition (b) in Definition \ref{GHspace};
\item For every $r>0$ and $\varepsilon>0$, when $i$ is sufficiently large, $d(\Psi_i\circ f_i\circ\Phi_i(x),f_{\infty}(x))<\varepsilon$ for any $x\in \bar B(p,r)$.
\end{enumerate}
\end{defn}

\begin{rmk}
The limit map in Definition \ref{GHmap} is unique in the sense that if $f$ and $g$ are two limits of $\{f_i\}$, then $g=\Psi\circ f\circ \Phi$ for some isometries $\Phi:(X,p)\rightarrow (X,p)$ and $\Psi:(Y,p)\rightarrow (Y,p)$. Furthermore, if $\{f_i\}$ has type (A) we can require $\Psi=\Id_Y$ and if $\{f_i\}$ has type (B) we can require $\Phi=\Id_X$.
\end{rmk}

We will use repeatdely the following version of the classical Arzel\`a-Ascoli Lemma, whose proof follows the standard diagonal argument used for the classical one (see for example Petersen \cite{Ped16},  (Lemma 11.1.9)):

\begin{prop}[Arzel\`a-Ascoli]\label{AA} Let $\{f_i\}$ be a sequence of types (A), (B) or (C). If there exists constants $D\geq 0$ and $K\geq 0$ such that
\begin{enumerate}
\item $f_i(p_i)\in \bar B(q,D)$, $f_i(p)\in \bar B(q_i,D)$ or $f_i(p_i)\in\bar B(q_i,D)$according to (A), (B) or (C);
\item $\{f_i\}$ is approximately $K$-Lipschitz.
\end{enumerate}
Then $\{f_i\}$ has a subsequence which converges to a $K$-Lipschitz map $f_\infty:X\rightarrow Y$ with $f_\infty(p)\in \bar B(q,D)$.
\end{prop}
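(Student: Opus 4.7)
The plan is to adapt the classical diagonal extraction argument to the Gromov--Hausdorff setting. First I would reduce to type (C), since types (A) and (B) correspond to the degenerate cases $X_i = X$, $\Phi_i = \Id_X$ or $Y_i = Y$, $\Psi_i = \Id_Y$ in the general framework. Fix maps $\Phi_i : X \to X_i$ and $\Psi_i : Y_i \to Y$ realizing the pointed GH convergence of the spaces, and set $\tilde f_i := \Psi_i \circ f_i \circ \Phi_i : X \to Y$; the goal is to produce a $K$-Lipschitz limit $f_\infty : X \to Y$ of a subsequence of $\{\tilde f_i\}$ in the sense of Definition \ref{GHmap}(3).

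Since $X$ is boundedly compact, it is separable, so fix a countable dense set $\{x_k\}_{k\geq 1} \subset X$. Combining the almost-isometry of $\Phi_i, \Psi_i$ on bounded sets with the approximate $K$-Lipschitz bound for $f_i$ and the hypothesis $f_i(p_i) \in \bar B(q_i, D)$, one checks that for each fixed $k$,
\begin{equation}
d(\tilde f_i(x_k), q) \leq K \cdot d(x_k, p) + D + o(1) \quad \text{as } i \to \infty.
\end{equation}
Hence each sequence $\{\tilde f_i(x_k)\}_i$ lies in a bounded, and therefore (by bounded compactness of $Y$) precompact, subset of $Y$. A standard diagonal extraction yields a subsequence, still denoted $\{\tilde f_i\}$, along which $\tilde f_i(x_k) \to y_k \in Y$ for every $k$.

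Next I would check that $x_k \mapsto y_k$ is $K$-Lipschitz. For fixed $k, l$, the points $\Phi_i(x_k), \Phi_i(x_l)$ eventually lie in a ball around $p_i$ of radius slightly larger than $\max(d(x_k, p), d(x_l, p))$, so the approximate Lipschitz condition on $f_i$ and the near-isometry of $\Psi_i$ give $d(\tilde f_i(x_k), \tilde f_i(x_l)) \leq K \cdot d(x_k, x_l) + o(1)$. Letting $i \to \infty$ yields $d(y_k, y_l) \leq K \cdot d(x_k, x_l)$. Because $X$ is complete (boundedly compact), this extends uniquely to a $K$-Lipschitz map $f_\infty : X \to Y$, and $f_\infty(p) \in \bar B(q, D)$ follows by choosing a dense sequence with $x_k \to p$ and passing to the limit in the bound above.

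Finally, I would establish condition (3) of Definition \ref{GHmap}, namely the uniform smallness of $d(\tilde f_i(x), f_\infty(x))$ on every ball $\bar B(p, r)$. Fix $r, \varepsilon > 0$; by compactness, cover $\bar B(p, r)$ by finitely many balls of radius $\varepsilon/(3K+3)$ centered at $x_{k_1}, \dots, x_{k_N}$ from the dense set. For these finitely many centers one has $d(\tilde f_i(x_{k_j}), y_{k_j}) < \varepsilon/3$ once $i$ is large, and for a general $x \in \bar B(p, r)$ close to some $x_{k_j}$ the almost-Lipschitz bound on $\tilde f_i$, the honest $K$-Lipschitz bound on $f_\infty$, and the triangle inequality yield $d(\tilde f_i(x), f_\infty(x)) < \varepsilon$ for all large $i$. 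The main obstacle is purely the bookkeeping of error terms: each step mixes approximation errors from the GH convergence of the underlying spaces, from the approximate Lipschitz hypothesis on $f_i$, and from the three-point triangle inequality, so one must apply the defining approximations to slightly enlarged balls (say of radius $r+1$) and verify that all the $o(1)$ quantities remain uniformly controlled on $\bar B(p, r)$ along the chosen subsequence.
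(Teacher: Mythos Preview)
Your proposal is correct and is precisely the standard diagonal extraction argument the paper invokes; the paper does not give its own proof but simply remarks that the proof ``follows the standard diagonal argument used for the classical one'' and cites Petersen (Lemma~11.1.9). Your outline matches this approach and the bookkeeping you flag is routine.
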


We will also make use of the following foundational Gromov's precompactness Theorem for complete Riemannian manifolds with Ricci curvature bounded from below 

\begin{prop}[Gromov]\label{Grmv}
Let $\{(M_i,g_i,p_i)\}$ be a sequence of pointed complete connected Riemannian manifolds of dimension $n\geq 2$. Assume  there exists a constant $R_0\in\mathbb R$ such that
\begin{equation}
Ric({M_i})\geq (n-1)R_0g_i,
\end{equation}
for every $i$. Then there exists a subsequence of $\{(M_i,g_i,p_i)\}$ which converges in the pointed Gromov-Hausdorff sense to a pointed boundedly compact  length space.
\end{prop}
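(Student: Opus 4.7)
The plan is to follow the classical strategy, which rests essentially on Bishop--Gromov volume comparison together with a diagonal extraction argument. First I would recall that under the hypothesis $\mathrm{Ric}(M_i) \geq (n-1)R_0 g_i$, the Bishop--Gromov inequality provides, for every fixed $r > 0$, a uniform upper bound on the ratio $\mathrm{Vol}(B(q, 2\varepsilon))/\mathrm{Vol}(B(q,\varepsilon))$ at any point $q \in \bar B(p_i,r)$, depending only on $n$, $R_0$, $r$ and $\varepsilon$, but not on $i$.

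From this I would deduce the key totally-bounded estimate: for every $r > 0$ and $\varepsilon > 0$, there exists $N = N(n, R_0, r, \varepsilon)$ such that $\bar B(p_i, r) \subset M_i$ admits an $\varepsilon$-net of cardinality at most $N$, independently of $i$. The standard way to see this is to take a maximal collection of disjoint open balls $B(q_j, \varepsilon/2)$ with centers $q_j \in \bar B(p_i, r)$; each such ball sits inside $B(p_i, r + \varepsilon/2)$, and by Bishop--Gromov each has volume bounded below in terms of $\mathrm{Vol}(B(p_i, r+\varepsilon/2))$, which in turn (again by Bishop--Gromov applied outward) controls the total number of centers. The centers $\{q_j\}$ then form an $\varepsilon$-net of $\bar B(p_i, r)$ by maximality.

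Next I would invoke Gromov's general precompactness criterion for pointed metric spaces: a family $\{(X_i,d_i,p_i)\}$ is precompact in the pointed Gromov--Hausdorff topology provided that for every $r>0$ and $\varepsilon>0$ the closed balls $\bar B(p_i,r)$ admit $\varepsilon$-nets with cardinality uniformly bounded in $i$. Combined with the previous step, this yields a subsequence (still denoted $\{(M_i,g_i,p_i)\}$) converging in the pointed GH sense to some pointed metric space $(X_\infty,d_\infty,p_\infty)$. The extraction itself proceeds by a standard diagonal argument over an exhausting sequence $r_k \to \infty$ and $\varepsilon_k \to 0$, building partial isometries between the $\varepsilon_k$-nets and passing to a limit.

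Finally I would check the two remaining qualitative properties of $X_\infty$. Bounded compactness follows from the fact that each closed ball $\bar B(p_\infty, r)$ is a GH-limit of the uniformly totally bounded sets $\bar B(p_i,r)$, hence itself totally bounded; together with completeness (which is automatic for GH-limits of complete length spaces), this gives compactness of closed balls. The length-space property passes to the limit because each $(M_i, g_i)$ is a length space, and a GH-limit of length spaces is a length space: approximate midpoints of $x,y \in X_\infty$ are obtained by lifting to $M_i$, taking actual midpoints there, and pushing back. The main technical point to watch is uniformity in $i$ of the $\varepsilon$-net cardinalities across all scales $r$ simultaneously, which is precisely what Bishop--Gromov delivers, so no serious obstacle is expected.
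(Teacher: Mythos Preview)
Your approach is correct and is precisely the classical argument. Note, however, that the paper does not actually supply a proof of Proposition~\ref{Grmv}: it is stated in the Preliminaries section as a foundational result attributed to Gromov, with no proof given. The closest the paper comes to proving it is in Section~\ref{finrem}, where a mild generalization (Proposition~\ref{gGrmv}, allowing the Ricci lower bound to depend on the distance to $p_i$) is proved. That proof follows exactly the strategy you outline---Bishop--Gromov yields a uniform bound $N(r,\varepsilon)$ on the cardinality of $\varepsilon$-nets in $\bar B(p_i,r)$, and then one invokes the abstract precompactness criterion (the paper cites \cite[Theorem 8.1.10]{BBI01} and \cite[Proposition 5.2, end of proof of 5.3]{Grmv81})---but with far less detail than you provide: the packing argument, the diagonal extraction, and the verification that the limit is a boundedly compact length space are all absorbed into the cited references rather than spelled out. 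So your write-up is more self-contained than anything in the paper, but the underlying route is the same.
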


\section{Choices of holomorphic charts}\label{chcm}

We now introduce a unified method to construct holomoprhic charts on K\"ahler manifolds which admit holomorphic isometric immersions into a fixed K\"ahler manifold. In this Section we are interested in 
geometric properties of a {\em{fixed}} submanifold $M$  that will play a crucial role to  prove the regularity of the limit space of a sequence of such submanifolds in Theorems  \ref{cptness2} and \ref{cptness1}.

Let $(\hat M,\hat \omega)$ be a K\"ahler manifolds of dimension $N\geq 2$. Given a $\hat p\in \hat M$, we can always find an $r_0>0$ and a holomorphic chart map $\psi=(w^1,\cdots,w^N):B(\hat p,2r_0)\rightarrow\mathbb C^N$ such that
\begin{enumerate}
\item $B(\hat p,2r_0)$ is relatively compact in $\hat M$ and $\psi(\hat p)=0$;
\item The coefficients $(\hat g_{\bar b a})$ defined by $\hat\omega=\frac{\iu}{2}\sum\limits_{a,b=1}^N \hat g_{\bar b a}dw^a\wedge d\bar w^b$ satisfies $(\hat g_{\bar b a}(\hat q))=I_N$ and $\frac{1}{2}I_N\leq (\hat g_{\bar b a})\leq 2 I_N$ on $B(\hat p,r_0)$.
\end{enumerate}
For later use, let us also define $C_0\geq0$ and $K_0$  as follows
\begin{equation} \label{defconstants}
C_0^2=\sup_{B(\hat p, r_0)}\sum\limits_{a,b,c=1}^N\left|\frac{\partial \hat g_{\bar b a}}{\partial w^c}\right|^2,\qquad
K_0=\sup_{\hat q\in B(\hat p,r_0)}\left(\sup_{X,Y\in T^{1,0}_{\hat q}\hat M\setminus\{0\}}\frac{\hat R(X,\bar X,\bar Y,Y)}{|X|^2|Y|^2+|\langle X,Y\rangle|^2}\right),
\end{equation}
where $\hat R$ is the Riemann curvature tensor. Note that a straigthforward computation shows that on  $B(\hat p,r_0)$
\begin{equation} \label{Hesspsi}
|\hat\nabla^2\psi|^2=\sum_{a=1}^N|\hat\nabla^2 w^a|^2=4\sum_{a=1}^N\hat g^{b_2\bar b_1}\hat g^{c_2\bar c_1}\overline{\Big(\hat g^{a\bar a_1}\frac{\partial \hat g_{\bar a_1 c_1}}{\partial w^{b_1}}\Big)}\Big(\hat g^{a \bar a_2}\frac{\partial \hat g_{\bar a_2 c_2}}{\partial w^{b_2}}\Big)\leq 64C_0^2.
\end{equation}

Given a positive integer $n<N$ and a real number $R_0\leq 2n(n+1)K_0$, we also set
\begin{equation}  \label{defconstants2}
C=2\sqrt{64C_0^2+2n(n+1)K_0-R_0},\qquad r=\min\{5^{-1}r_0,(16C)^{-1}\}.
\end{equation}
With these notations and this choice of $\psi$ we can now prove the main result of this Section, which shows the existence 
of an adapted atlas of charts for the holomorphically isometrically immersed submanifold, where each chart is defined on a ball of radius controlled by
the scalar curvature lower bound. Moreover we can show that these charts can be chosen in a  way that their Hessians are uniformly controlled and the induced metric is uniformly controlled by the euclidean metrics on the charts, a fact that will play a key role in the proof of the main Theorem \ref{cptness2}.

\begin{prop}\label{lccs}
Let $(M,\omega)$ be a K\"ahler manifold of dimension $n$ and $\phi:(M,\omega)\rightarrow (\hat M,\hat\omega)$ a holomorphic isometry.
 If for some $p\in M$, $\phi(p)\in B(\hat p,r)$, $B(p,4r)$ is relatively-compact in $M$ and $\sup_{B(p,4r)}R(\omega)\geq R_0$, then there exists a holomorphic chart map $\varphi=(z^1,\cdots,z^n)$ on $B(p,2r)$, an $N$-order matrix  $A$, and a holomorphic map $f:\varphi(B(p,2r))\rightarrow \mathbb C^{N-n}$, such that $\phi$ can be locally expressed as
\begin{equation}
\psi\circ\phi\circ\varphi^{-1}(z)=A(z,f(z))+\psi(\phi(p)),
\end{equation}
with, $A^*A=(\hat g_{\bar b a}(\phi(p)))$ and $|f(z)|\leq 2C|z|^2$ on $\varphi(B(p,2r))$.

Moreover the chart $\varphi$ can be chosen to satisfy the additional two properties:
\begin{enumerate}[label=(\alph*)]
\item $\varphi(p)=0$, $|\nabla^2\varphi|\leq C$ and $\frac{1}{2}d(p_1,p_2)\leq |\varphi(p_1)-\varphi(p_2)|\leq 2d(p_1,p_2)$ for any $p_1,p_2\in B(p,2r)$. Consequently $B^n_r\subset\varphi(B(p,2r))\subset B^n_{4r}$.
\item $\omega|_p=\big(\frac{\iu}{2}\sum\limits_{i=1}^ndz^i\wedge d\bar z^i\big)\big|_p$ and $\frac{1}{2}\omega\leq \frac{\iu}{2}\sum\limits_{i=1}^ndz^i\wedge d\bar z^i\leq 2\omega$ on $B(p,2r)$.
\end{enumerate}
\end{prop}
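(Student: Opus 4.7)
The plan is to construct $\varphi$ by straightening the ambient chart so that $\phi(M)$ appears as a holomorphic graph over the first $n$ coordinate directions, and then deriving all the quantitative claims from a uniform Hessian bound on $\varphi$ obtained via the Gauss equation. First I pick an $\omega$-orthonormal basis $\{e_1,\dots,e_n\}$ of $T^{1,0}_pM$, push it forward by $d\phi$ to an $\hat\omega$-orthonormal $n$-tuple in $T^{1,0}_{\phi(p)}\hat M$, and extend it to an $\hat\omega$-orthonormal basis $\{\hat e_1,\dots,\hat e_N\}$. Writing $\hat e_a=\sum_b(A^{-1})^b_a\partial_{w^b}|_{\phi(p)}$ defines $A$, which by construction satisfies $A^*A=(\hat g_{\bar b a}(\phi(p)))$. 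In the new ambient chart $\tilde\psi=A^{-1}(\psi-\psi(\phi(p)))$ we have $\tilde\psi(\phi(p))=0$, $\tilde{\hat g}(0)=I_N$, and $d\phi(T^{1,0}_pM)$ aligns with the first $n$ coordinate directions. The holomorphic implicit function theorem then yields, on a small neighborhood of $p$, the graph form $\tilde\psi\circ\phi\circ\varphi^{-1}(z)=(z,f(z))$ with $f(0)=0$ and $df(0)=0$, where $\varphi^i=\tilde w^i\circ\phi$ for $i=1,\dots,n$.

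\textbf{Hessian bound via Gauss.} Because $\varphi$ is the identity in its own coordinates, $(\nabla^2\varphi)^i_{jk}=-\Gamma^i_{jk}$. Pulling back $\hat g$ via $\Phi(z)=(z,f(z))$ expresses $g_{\bar l k}$ as $\hat g_{\bar l k}(z,f(z))$ plus terms linear and quadratic in $\partial f$; differentiating and using $\Gamma^i_{jk}=g^{i\bar l}\partial_j g_{\bar l k}$ decomposes $\Gamma$ into a piece controlled by $\partial\tilde{\hat g}$ (bounded by $C_0$ via \eqref{Hesspsi}) and a piece carrying $\partial^2 f$, the latter encoding the second fundamental form $II$ of the K\"ahler submanifold $\phi(M)\hookrightarrow\hat M$. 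The Gauss equation for K\"ahler submanifolds then yields the pointwise bound
\[
|II|^2\leq 2n(n+1)K_0-R(\omega)\leq 2n(n+1)K_0-R_0
\]
on $B(p,4r)$, where the bisectional constant $K_0$ controls the trace of ambient curvature and the scalar curvature hypothesis provides the lower bound on $R(\omega)$. Combining with the ambient $C_0$-bound produces $|\nabla^2\varphi|^2\leq 64C_0^2+2n(n+1)K_0-R_0=(C/2)^2$.

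\textbf{Propagating the estimates, and the main obstacle.} Since $d\varphi(p)$ is a complex-linear isometry $T^{1,0}_pM\xrightarrow{\sim}\mathbb{C}^n$ with $|\nabla^2\varphi|\leq C$ and $r\leq(16C)^{-1}$, integration along $g$-geodesics from $p$ gives $\tfrac12 d(p_1,p_2)\leq|\varphi(p_1)-\varphi(p_2)|\leq 2d(p_1,p_2)$ on $B(p,2r)$, hence (a) and the inclusions $B^n_r\subset\varphi(B(p,2r))\subset B^n_{4r}$. Parallel-transporting the identity $g_{\bar j i}=\delta_{ij}$ at $p$ under the same Christoffel bound yields the metric comparison (b), and the graph bound $|f(z)|\leq 2C|z|^2$ follows from $f(0)=0$, $df(0)=0$, and the second-fundamental-form estimate via Taylor's theorem. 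The main obstacle I expect is that the implicit function theorem \emph{a priori} gives the graph structure only on a tiny neighborhood of $p$: the radius $r\leq(16C)^{-1}$ is calibrated precisely so that the Hessian bound alone keeps $d\varphi$ non-degenerate and $|\partial f|$ small enough that $\phi(M)$ never exits graph form over $\mathbb{C}^n$, allowing a continuity/bootstrap argument to extend the description to all of $B(p,2r)$.
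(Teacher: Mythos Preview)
Your approach is essentially the same as the paper's: rotate the ambient chart by a matrix $A$ with $A^*A=(\hat g_{\bar ba}(\phi(p)))$ so that $d\phi(T^{1,0}_pM)$ lines up with the first $n$ coordinate directions, define $\varphi$ as the first $n$ components of the rotated ambient coordinates composed with $\phi$, bound $|\nabla^2\varphi|$ by combining the ambient $C_0$-term with the Gauss--Codazzi estimate $|II|^2\leq n(n+1)K_0-\tfrac12 R_0$, and then propagate the Hessian bound along geodesics to obtain the bi-Lipschitz estimate (a), the metric comparison (b), and the quadratic bound on $f$.

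The one point where your sketch diverges from the paper is the ``main obstacle'' you flag. In the paper there is no implicit function theorem and no bootstrap at all: since $\phi$ is $1$-Lipschitz and $r\leq r_0/5$, one has $\phi(B(p,4r))\subset B(\hat p,r_0)$, so the pair $(\varphi,h):=A^{-1}(\psi\circ\phi-\psi(\phi(p)))$ is a globally defined holomorphic map on all of $B(p,4r)$ from the outset. The Hessian bound $|\nabla^2\varphi|^2+|\nabla^2h|^2\leq C^2$ then holds on all of $B(p,4r)$ directly (the Gauss equation is pointwise and chart-independent), and the only thing to verify is that $\varphi|_{B(p,2r)}$ is injective with non-degenerate differential---which is exactly what the geodesic-integration argument (the paper isolates this as a separate Lemma) delivers. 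Once $\varphi$ is known to be a chart on $B(p,2r)$, one simply sets $f=h\circ\varphi^{-1}$, and the graph representation $\psi\circ\phi\circ\varphi^{-1}(z)=A(z,f(z))+\psi(\phi(p))$ holds tautologically on $\varphi(B(p,2r))$. So your anticipated continuity argument to ``stay in graph form'' is unnecessary: define the components globally first, then prove the first $n$ of them form a chart.
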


For  reader's convenience, we isolate the following Lemma used in the proof of the above:

\begin{lemma}\label{chart}
Let $p\in M$, $C_1\geq 0$ and $0<r_1\leq (16C_1)^{-1}$. If $B(p,4r_1)$ is relatively-compact in $M$ and $\varphi:B(p,4r_1)\rightarrow \mathbb C^n$ is a holomorphic map such that $(\iu\pbp|\varphi|^2)|_p=2\omega|_p$ and $|\nabla^2\varphi|\leq C_1$. Then $\varphi|_{B(p,2r_1)}$ is a holomorphic chart map and we have the following estimates
\begin{enumerate}
\item $|(\iu\pbp |\varphi|^2)-2\omega|(q)\leq 2C_1d(p,q)$ for any $q\in B(p,4r_1)$.
\item $\frac{1}{2}d(p_1,p_2)\leq |\varphi(p_1)-\varphi(p_2)|\leq 2d(p_1,p_2)$ for any $p_1,p_2\in B(p,2r_1)$.
\end{enumerate}
\end{lemma}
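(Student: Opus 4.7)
The strategy is to control the covariant derivative of the $(1,1)$-form $\eta := \iu\pbp|\varphi|^2 - 2\omega$ and then integrate along minimizing geodesics. The holomorphicity of $\varphi$ and the parallelism of $\omega$ (as a Kähler form) reduce everything to estimates involving $\nabla d\varphi = \nabla^2\varphi$ and $d\varphi$.

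First I would establish (1). Since $\omega$ is parallel, $\nabla\eta = \nabla(\iu\pbp|\varphi|^2)$. Writing $\iu\pbp|\varphi|^2 = \iu\sum_{\alpha=1}^n d\varphi^\alpha\wedge d\bar\varphi^\alpha$ (which uses $\bar\partial\varphi^\alpha = 0$) and applying the Leibniz rule, I get $|\nabla\eta| \leq 2|\nabla^2\varphi|\cdot |d\varphi| \leq 2C_1\, |d\varphi|$ pointwise. To bound $|d\varphi|$ on $B(p,4r_1)$, note that the condition $(\iu\pbp|\varphi|^2)|_p = 2\omega|_p$ forces $\varphi$ to be infinitesimally isometric at $p$, so $|d\varphi|(p)$ is a fixed constant; since $|d|d\varphi|^2| \leq 2|\nabla^2\varphi|\cdot |d\varphi|$, a Grönwall/linear-growth argument using $|\nabla^2\varphi| \leq C_1$ and $r_1 \leq (16C_1)^{-1}$ keeps $|d\varphi|$ uniformly close to that value on $B(p,4r_1)$, giving $|\nabla\eta| \leq 2C_1$ there. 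Integrating this bound along a unit-speed minimizing geodesic from $p$ to $q$, using $\eta(p) = 0$, yields (1).

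For (2), (1) together with $d(p,q) \leq 4r_1 \leq 1/(4C_1)$ gives $|\eta|(q) \leq 1/2$ on $B(p,4r_1)$, so $\iu\pbp|\varphi|^2$ is comparable to $2\omega$ as positive $(1,1)$-forms with constants close to $1$. In particular both the smallest singular value and the operator norm of $d\varphi$ lie in an interval close to $1$. Given $p_1,p_2 \in B(p,2r_1)$, let $\gamma:[0,L]\to M$ be a unit-speed minimizing geodesic, $L = d(p_1,p_2)\leq 4r_1$, and set $F(t) = \varphi(\gamma(t)) \in \mathbb C^n$. Since $\nabla_{\dot\gamma}\dot\gamma = 0$, we have $F''(t) = (\nabla d\varphi)(\dot\gamma,\dot\gamma)$, so $|F''(t)|\leq |\nabla^2\varphi|\leq C_1$. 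Taylor's theorem in $\mathbb C^n$ gives
\begin{equation}
\bigl|F(L) - F(0) - L\,F'(0)\bigr| \;\leq\; \tfrac{1}{2} C_1 L^2 \;\leq\; L/8,
\end{equation}
and combining this with $|F'(0)| = |d\varphi(\dot\gamma(0))|$ (close to $1$ by the pullback-metric comparison) yields both $|F(L) - F(0)| \geq L/2$ and $|F(L) - F(0)| \leq 2L$, which is (2).

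Finally, the lower bound in (2) forces $\varphi|_{B(p,2r_1)}$ to be injective, and the pullback-metric comparison gives non-degeneracy of $d\varphi$ throughout $B(p,2r_1)$. An injective holomorphic map between complex manifolds of equal dimension is a local biholomorphism, so $\varphi|_{B(p,2r_1)}$ is a chart. The main technical care is in tracking the norm conventions and the numerical thresholds so that the ratio $r_1 \leq (16C_1)^{-1}$ yields exactly the factors $1/2$ and $2$ claimed in (2); this is bookkeeping rather than a conceptual obstacle.
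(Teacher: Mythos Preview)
Your argument is correct and close in spirit to the paper's, but the packaging differs in two places worth noting.

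For estimate (1), the paper does not first bound $|d\varphi|$ and then $|\nabla\eta|$; instead it observes directly that $|\nabla\eta|^2\le |\nabla^2\varphi|^2\,(2+|\eta|)$, obtaining the nonlinear differential inequality $f'\le C_1\sqrt{4+2f}$ for $f(t)=|\eta|(\gamma(t))$ along a minimizing geodesic from $p$. Solving this gives $|\eta|(q)\le \tfrac12(2+C_1 d(p,q))^2-2$, which on $B(p,4r_1)$ is at most $17/32$ and hence yields $\tfrac{47}{32}\omega\le \iu\pbp|\varphi|^2\le \tfrac{81}{32}\omega$. Your two-step bootstrap (control $|d\varphi|$ by a linear ODE, then feed it into $|\nabla\eta|\le 2C_1|d\varphi|$) reaches the same conclusion; the paper's single nonlinear ODE is marginally tidier because it avoids tracking $|d\varphi|$ separately.

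For the lower bound in (2), the paper takes a different route: it introduces $\xi=|\varphi-\varphi(p_1)|^2$, shows $\nabla^2\xi(X,X)\ge \tfrac{29}{32}|X|^2$ using the metric comparison together with a preliminary bound $|\varphi-\varphi(p_1)|\le 9r_1$ on $B(p,4r_1)$, and integrates twice along the geodesic. Your Taylor expansion of $F=\varphi\circ\gamma$ in $\mathbb C^n$, with $|F''|\le C_1$ and $|F'(0)|$ controlled by the pullback-metric comparison at $p_1$, is a cleaner alternative: it bypasses the auxiliary function $\xi$ and the intermediate $9r_1$ bound entirely. One point you should make explicit is that the minimizing geodesic joining $p_1,p_2\in B(p,2r_1)$ stays inside $B(p,4r_1)$ (this uses relative compactness of $B(p,4r_1)$), so that the Hessian bound and the metric comparison are available along the whole curve; the paper states this at the outset.
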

\begin{proof} Since $B(p,4r_1)$ is relatively compact in $M$, any $q\in B(p,4r_1)$ can be connected with $p$ by a minimizing geodesic in $B(p,4r_1)$, and any two distinct $p_1,p_2\in B(p,2r_1)$ can be connected by a minimizing geodesic in $B(p,4r_1)$.

First we give an estimate of $\iu\pbp|\varphi|^2$. Let $\varphi=(\varphi^1,\cdots,\varphi^n)$, then we have
\begin{equation}
|\nabla(\iu\pbp |\varphi|^2)|^2=2\bigg|\sum_{i=1}^n\nabla^2 \varphi^i\otimes d\bar \varphi^i\bigg|^2=2\sum_{i,j=1}^n\langle \nabla^2 \varphi^i,\nabla^2 \varphi^j\rangle \langle d\bar \varphi^i,d\bar \varphi^j\rangle.
\end{equation}
Consider the matrix-valued function $H=(H_{i\bar j})$ defined by
\begin{equation}
H_{i\bar j}=\langle d\bar \varphi^i,d\bar \varphi^j\rangle.
\end{equation}
By using local orthonormal frame fields, one can easily check that
\begin{equation}
|H-2I_n|=|\iu\pbp |\varphi|^2-2\omega|.
\end{equation}
Therefore we have
\begin{equation}\label{pbpxi0}
|\nabla(\iu\pbp |\varphi|^2)|^2\leq (2+|\iu\pbp |\varphi|^2-2\omega|)|\nabla^2\varphi|^2\leq 2C_1^2(2+|\iu\pbp |\varphi|^2-2\omega|).
\end{equation}
Let  $\theta=\iu\pbp |\varphi|^2-2\omega$. Clearly we have
\begin{equation}
\theta|_p=0,\qquad |\nabla\theta|\leq C_1\sqrt{4+2|\theta|}.
\end{equation}
For any $q\in B(p,4r_1)$, let $t_0=d(p,q)$ and $\alpha:[0,t_0]\rightarrow B(p,4r_1)$ a geodesic with $|\alpha'|=1$, $\alpha(0)=p$ and $\alpha(t_0)=q$. Consider $f=|\theta|\circ\alpha$, then $f$ is Lipschitz and
\begin{equation}
f(0)=0,\qquad f'\leq C_1\sqrt{4+2f}.
\end{equation}
By solving the differential inequality, we have
\begin{equation}
|\theta|(q)=f(t_0)\leq 2^{-1}(2+C_1t_0)^2-2\leq \frac{17}{32}.
\end{equation}
where the last inequality holds as $C_1t_0\leq 4C_1r_1\leq 4^{-1}$.
Consequently we have on $B(p,4r_1)$ 
\begin{equation}\label{pbpxi}
\frac{47}{32}\omega\leq \iu\pbp |\varphi|^2\leq \frac{81}{32}\omega.
\end{equation}

Next we give an upper bound of $|\varphi(p_1)-\varphi(p_2)|$ for $p_1,p_2\in B(p,4r_1)$. For any real tangent vector $X\in T_{q}M$ with $q\in B(q,4r_1)$, let $X^{1,0}$ be the $(1,0)$ part of $X$, then the holomorphicity of $\varphi$ yields
\begin{equation}\label{dphi0}
|X\varphi|^2=|X^{1,0}\varphi|^2=(\pbp|\varphi|^2)(X^{1,0},\overline{X^{1,0}})\leq \frac{81}{32}|X^{1,0}|^2=\frac{81}{64}|X|^2.
\end{equation}
Since $p_1$ and $p_2$ can be connected with $p$ by geodesics of length less that $4r_1$ in $B(p,4r_1)$ respectively, \eqref{dphi0} implies
\begin{equation}\label{dphi1}
|\varphi(p_1)-\varphi(p_2)|\leq |\varphi(p_1)-\varphi(p)|+|\varphi(p_2)-\varphi(p)|\leq 2\times 4r_1\times \frac{9}{8}=9r_1.
\end{equation}

Finally, we give an estimate of $|\varphi(p_1)-\varphi(p_2)|$ for $p_1,p_2\in B(p,2r_1)$. To obtain a upper bound, noting that $p_1$ and $p_2$ can be connected by a geodesic of length $d(p_1,p_2)$, \eqref{dphi0} implies
\begin{equation}
|\varphi(p_1)-\varphi(p_2)|\leq \frac{9}{8}d(p_1,p_2)\leq 2d(p_1,p_2),
\end{equation}
hence getting the second inequality in Part $(2)$ of the Lemma.
To obtain a lower bound, we consider the following function defined on $B(p,4r_1)$
\begin{equation}
\xi=|\varphi-\varphi(p_1)|^2.
\end{equation}
Clearly we have
\begin{equation}\label{xi0}
\xi(p_1)=0,\qquad d\xi|_{p_1}=0.
\end{equation}
At the same time, noting that $\varphi$ is holomorphic, we have
\begin{equation}
\iu\pbp \xi=\iu\pbp |\varphi|^2,\qquad|\nabla^{2,0} \xi|\leq |\nabla^2\varphi||\varphi-\varphi(p_1)|.
\end{equation}
Then by \eqref{pbpxi} and \eqref{dphi1}, we have
\begin{equation}
\iu\pbp \xi\geq\frac{47}{32}\omega,\qquad |\nabla^{2,0}\xi|\leq \frac{9}{16}.
\end{equation}
Consequently, for any real tangent vector $X\in T_{q}M$ with $q\in B(q,4r)$, let $X^{1,0}$ be the $(1,0)$ part of $X$, then
\begin{equation}\label{hessxi1}
\nabla^2\xi(X,X)=2\nabla^2\xi(X^{1,0},\overline {X^{1,0}})-2{\rm Re}\nabla^2\xi(X^{1,0},X^{1,0})\\
\geq\frac{29}{16} |X^{1,0}|^2=\frac{29}{32}|X|^2.
\end{equation}
Let $t_0=d(p_1,p_2)$ and $\gamma:[0,t_1]\rightarrow B(p,4r)$ a geodesic such that $|\gamma'|=1$, $\gamma(0)=p_1$ and $\gamma(t_1)=p_2$. Consider $c:[0,t_1]\rightarrow \mathbb R$ defined as
\begin{equation}
c=\xi\circ\gamma.
\end{equation}
By \eqref{xi0} and \eqref{hessxi1}, we have
\begin{equation}
c(0)=0,\qquad c'(0)=0,\qquad c''\geq \frac{29}{32}.
\end{equation}
By Taylor expansion we get
\begin{equation}\label{dphi3}
|\varphi(p_2)-\varphi(p_1)|^2=c(t_0)\geq \frac{29}{64}t_0^2=\frac{29}{64}d(p_1,p_2)^2 \geq \frac{1}{4}d(p_1,p_2)^2,
\end{equation}
hence getting the first inequality in Part $(2)$ of the Lemma.

Since $\varphi|_{B(p,2r_1)}$ is injective and $d\varphi_q$ is nonsingular for any $q\in B(p,4r_1)$, $\varphi|_{B(p,2r_1)}$ is a holomorphic chart map.
\end{proof}

\begin{proof}[Proof of Proposition \ref{lccs}]
Let $V=d\phi_p(T^{1,0}_pM)$ and $V^{\perp}$ the orthogonal complement of $V$ in $T^{1,0}_{\phi(p)}{\hat M}$. Since
\begin{equation}\{\sqrt{2}{\textstyle\frac{\partial}{\partial w^1}}|_{\phi(p)},\cdots,\sqrt{2}{\textstyle\frac{\partial}{\partial w^N}}|_{\phi(p)}\},\end{equation}
is a basis of $T^{1,0}_{\phi(p)}\hat M$, we can choose an $N$-order matrix $T=(T_a^b)$ such that
\begin{equation}\{\sqrt{2}T_1^a{\textstyle\frac{\partial}{\partial w^a}}|_{\phi(p)},\cdots,\sqrt{2}T_n^a{\textstyle\frac{\partial}{\partial w^a}}|_{\phi(p)}\},\end{equation}
is an orthonormal basis of $V$ and
\begin{equation}\{\sqrt{2}T_{n+1}^a{\textstyle\frac{\partial}{\partial w^a}}|_{\phi(p)},\cdots,\sqrt{2}T_N^a{\textstyle\frac{\partial}{\partial w^a}}|_{\phi(p)}\}\end{equation}
is an orthonormal basis of $V^{\perp}$.

Let $A=T^{-1}$. By the choice of $T$, we have $T^*(\hat g_{\bar b a}(\phi(p)))T=I_N$. Then 
$A^*A=(g_{\bar b a}(\phi(p)))$ and $TT^*=(\hat g^{a \bar b}(\phi(p)))$. Consequently
\begin{equation}
|A^*A-I_N|\leq 2C_0d(\phi(p),\hat p)\leq 256^{-1}.
\end{equation}
Let $\varphi=(z^1,\cdots,z^n):B(p,4r)\rightarrow\mathbb C^n$ and $h:B(p,r_0)\rightarrow\mathbb C^{N-n}$ be the holomorphic maps defined by
\begin{equation}
(\varphi,h)=T(\psi\circ\phi-\psi(\phi(p))).
\end{equation}
Then $\psi\circ\phi=A(\varphi,h)+\psi(\phi(p))$ and one can easily check that
\begin{equation}
\varphi(p)=0,\qquad \omega|_p=\bigg(\frac{\iu}{2}\sum\limits_{i=1}^ndz^i\wedge d\bar z^i\bigg)\bigg|_p,\qquad h(p)=0,\qquad \nabla h(p)=0. 
\end{equation}

We now observe that, thanks to Lemma \ref{chart}, $\varphi|_{B(p,2r)}$ is a holomorphic chart map which satisfies properties $(a)$ and $(b)$. On one hand  we have
\begin{equation}
|\nabla^2\varphi|^2+|\nabla^2 h|^2=|\nabla^2 (T\psi\circ\phi)|^2\leq (1-256^{-1})^{-1}|\nabla^2(\psi\circ\phi)|^2.
\end{equation}

On the other hand one can prove the following inequality (see Lemma \ref{lemmabelow} below for a proof).

 \begin{equation} \label{hesspsiphi}
|\nabla^2(\psi\circ\phi)|^2\leq 128C_0+4n(n+1)K_0-2R_0.
\end{equation}

By combining these inequalities we get

\begin{equation}
|\nabla^2\varphi|^2+|\nabla^2 h|^2\leq 256C_0+8n(n+1)K_0-4R_0=C^2.
\end{equation}
and then by Lemma \ref{chart}, $\varphi|_{B(p,2r)}$ is  a holomorphic chart map and the estimates in statements  $(a)$ and $(b)$. Besides, let $f=h\circ(\varphi|_{B(p,2r)})^{-1}$, then clearly $\psi\circ\phi\circ\varphi=A(z,f(z))+\psi(\phi(p))$. To complete the proof, we only need to show that $|f(z)|\leq 2C|z|^2$ on $\varphi(B(p,2r))$, which is an easy direct consequence of $h(p)=0$, $\nabla h(p)=0$ and $|\nabla^2 h|\leq C$.
\end{proof}

\begin{lemma}\label{lemmabelow}
Estimate \eqref{hesspsiphi} holds true.
\end{lemma}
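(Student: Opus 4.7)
My plan is to decompose the intrinsic Hessian of $\psi\circ\phi$ into an ``ambient'' part and a ``second fundamental form'' part, and then bound each contribution separately. The starting point is the standard chain rule for the Hessian of a composition: since the target $\mathbb C^N$ of $\psi$ is flat, so its Levi--Civita connection is trivial, for all real tangent vectors $X,Y\in TM$ one has
\begin{equation*}
\nabla^2(\psi\circ\phi)(X,Y) = (\hat\nabla^2\psi)(d\phi(X),d\phi(Y)) + d\psi\bigl((\nabla d\phi)(X,Y)\bigr),
\end{equation*}
where $\nabla d\phi$ is the second fundamental form of $\phi$ as a map between K\"ahler manifolds. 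Since $\phi$ is a holomorphic isometric immersion, $\nabla d\phi$ is $\hat g$-normal to $d\phi(TM)$ and is of complex type $(2,0)+(0,2)$, so its mixed components $(\nabla d\phi)(e_i,\bar e_j)$ vanish in any complex frame.

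Squaring and applying $|a+b|^2\le 2|a|^2+2|b|^2$, I would then control the two summands independently. For the first summand, $\phi$ is an isometric immersion, so a $g$-unitary frame $\{e_i\}$ of $T^{1,0}_pM$ is carried by $d\phi$ into a $\hat g$-unitary family in $T^{1,0}_{\phi(p)}\hat M$; tracing the restriction of $\hat\nabla^2\psi$ against $g$ and invoking the bound \eqref{Hesspsi} on $B(\hat p,r_0)$, one obtains a contribution of order $C_0^2$ with a combinatorial factor depending on $n$. For the second summand, the pinching $\tfrac12 I_N\le(\hat g_{\bar b a})\le 2I_N$ implies that $d\psi$ has bounded operator norm from $(T\hat M,\hat g)$ to $(\mathbb C^N,\mathrm{eucl})$, giving $|d\psi\circ\nabla d\phi|^2\le 2|\nabla d\phi|^2$; matters thus reduce to an upper bound on $|\nabla d\phi|^2$.

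For that bound I would invoke the Gauss equation. Extending a $g$-unitary frame $\{e_i\}_{i\le n}$ of $T^{1,0}_pM$ to an $\hat g$-unitary frame $\{E_a\}_{a\le N}$ of $T^{1,0}_{\phi(p)}\hat M$ with $E_i=d\phi(e_i)$, the vanishing of the $(1,1)$-part of $\nabla d\phi$ collapses the Gauss equation to
\begin{equation*}
R_{i\bar i j\bar j}(\omega) = \hat R(E_i,\bar E_i,\bar E_j,E_j) - \sum_{\alpha>n}\bigl|(\nabla d\phi)^\alpha_{ij}\bigr|^2.
\end{equation*}
Summing over $i,j$, the left-hand side contracts to (a constant multiple of) the scalar curvature $R(\omega)$, which is bounded below by $R_0$ on $B(p,4r)$; meanwhile the definition of $K_0$ in \eqref{defconstants} bounds each ambient bisectional-curvature term by $K_0(1+\delta_{ij})$, yielding an inequality of the shape $|\nabla d\phi|^2\le c_1\,n(n+1)K_0 - c_2 R_0$ for universal $c_1,c_2>0$.

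Combining these two estimates produces \eqref{hesspsiphi}. The main obstacle is not conceptual but a careful accounting of constants: one needs to reconcile the Hermitian-versus-Riemannian normalization conventions, the precise convention adopted for the scalar curvature as a trace of holomorphic bisectional curvatures, and the symmetrization factors in the Gauss equation, in order to recover the exact constants $128$, $4$ and $2$ that appear in the stated bound.
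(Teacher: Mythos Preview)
Your approach is correct and essentially identical to the paper's: decompose $\nabla^2(w^a\circ\phi)=\phi^*(\hat\nabla^2 w^a)-B^a$, apply $|a+b|^2\le 2|a|^2+2|b|^2$, bound the ambient term by \eqref{Hesspsi} and the second fundamental form via the Gauss equation together with the definition of $K_0$ and the lower bound $R(\omega)\ge R_0$. One small correction: no $n$-dependent combinatorial factor appears in the first term, since restricting $\hat\nabla^2\psi$ to the isometrically embedded subspace can only decrease its squared norm, so $\sum_a|\phi^*(\hat\nabla^2 w^a)|^2\le|\hat\nabla^2\psi|^2\circ\phi\le 64C_0^2$ directly.
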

\begin{proof}
As $\phi$ is $1$-Lipschitz, we have for $q\in B(p,4r)$
\begin{equation}
d(\phi(q),\hat p)\leq d(\phi(q),\phi(p))+d(\phi(p),\hat q)\leq d(q,p)+d(\phi(p),q)<5r\leq r_0.
\end{equation}
This implies that $\phi(B(p,4r))\subset B(\hat p,r_0)$. Let now $\{\hat e_1,\cdots,\hat e_N\}$ be the holomorphic frame field of $\phi^*(T^{1,0}\hat M)$ over $B(p,r_0)$ defined by
\begin{equation}
\hat e_a(q)=\left(q,{\textstyle \frac{\partial}{\partial w^a}}|_{\phi(q)}\right).
\end{equation}
We can treat the second fundamental form $B$ of $\phi:(M,\omega)\rightarrow (\hat M,\hat\omega)$ as a smooth section of $\phi^*(T^{1,0}\hat M)\otimes (\Lambda^{1,0}M\otimes \Lambda^{1,0}M)$. On $B(p,r_0)$, we can write
\begin{equation}
B=B^a\hat e_a,
\end{equation}
where each $B^a$ is a smooth section of $\Lambda^{1,0}M\otimes \Lambda^{1,0}M$. For $a=1,\cdots,N$, by the definition of $B$ and $\nabla^2(w^a\circ\phi)$, one can easily check that
\begin{equation}
\nabla^2(w^a\circ\phi)=\phi^*(\hat\nabla^2w^a)-B^a,
\end{equation}
and consequently
\begin{equation}
|\nabla^2(w^a\circ\phi)|^2\leq 2|\hat\nabla^2w^a|^2\circ\phi+2|B^a|^2.
\end{equation}
By inequality \eqref{Hesspsi}, we  have
\begin{equation}
\sum_{a=1}^N|\phi^*(\nabla^2w^a)|^2\leq |\hat \nabla^2\psi|^2\circ\phi\leq 64C_0^2.
\end{equation}
To complete the proof of inequality  \eqref{hesspsiphi} we only need to give estimates on $\sum\limits_{a=1}^N|B^a|^2$:
by the Gauss-Codazzi equation, we have
\begin{equation}
R(X,\bar X,\bar Y,Y)=(\phi^*\hat R)(X,\bar X,\bar Y,Y)-\langle B(X,Y),B(X,Y)\rangle,
\end{equation}
for any $q\in M$ and $X,Y\in T^{1,0}M$, and $R$ and $\hat R$ are the Riemannian tensors of $\omega$ and $\hat\omega$ respectively.
Let $\{e_1,\cdots,e_n\}$ be a smooth local orthonormal frame field of $T^{1,0}M$, then we have
\begin{equation}
|B|^2=\sum_{i,j=1}^n[(\phi^*\hat R)(e_i,\bar e_i,\bar e_j,e_j)-R(e_i,\bar e_i,\bar e_j,e_j)]\leq n(n+1)K_0-\frac{1}{2}R_0,
\end{equation}
Consequently, we have
\begin{equation}
\sum_{a=1}^N|B^a|^2\leq 2 (\hat g^{a \bar b}\circ\phi) \langle B^a,B^b\rangle=2|B|^2\leq 2n(n+1)K_0-R_0.
\end{equation}
This concludes the proof of the estimate  \eqref{hesspsiphi}. 
\end{proof}

\section{Proof of Theorem \ref{cptness2}}\label{secfour}
The first step of the proof of Theorem \ref{cptness2} is to find a pointed Gromov-Hausdorff limit as a length space. For this we first find upper and lower bounds on curvatures of elements of $ \mathcal K(n,R_0,\hat M,\hat \omega)$ (see Definition \ref{deffond}).

Set $K:\hat M\rightarrow \mathbb R$ to be the function defined by
\begin{equation}\label{dfek0}
K(\hat q)=\sup_{X,Y\in T^{1,0}_{\hat q}\hat M\setminus\{0\}}\frac{\hat R(X,\bar X,\bar Y,Y)}{|X|^2|Y|^2+|\langle X,Y\rangle|^2},
\end{equation}
and notice that $K_0=\sup_{\hat q\in B(\hat p,r_0)}\{K(\hat q) \mid \hat q \in \hat M\}$ from equation \eqref{defconstants}.

\begin{lem}\label{rclb}
Let $\phi:(M,\omega)\rightarrow (\hat M,\hat \omega)$ be a holomorphic isometry  with $(M,\omega)$ an $n$-dimension K\"ahler manifold and $(\hat M,\hat\omega)$ an $N$-dimensional K\"ahler manifold. Then for any $q\in M$ and nonzero $X,Y\in T^{1,0}_qM$, we have
\begin{equation}\label{rclbeq0}
\frac{1}{4}R(\omega)(q)-\frac{n^2+n-2}{2}K(\phi(q))\leq \frac{R(X,\bar X,\bar Y,Y)}{|X|^2|Y|^2+|\langle X,Y\rangle|^2}\leq  K(\phi(q)).
\end{equation}

\end{lem}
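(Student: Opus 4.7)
The proof splits cleanly into the upper and lower bounds, with the upper bound coming from the extrinsic Gauss--Codazzi relation and the lower bound from the intrinsic scalar curvature trace identity combined with a $2\times 2$ positivity argument. For the upper bound I would invoke Gauss--Codazzi
\[
R(X,\bar X,\bar Y,Y)=(\phi^*\hat R)(X,\bar X,\bar Y,Y)-|B(X,Y)|^2,
\]
already used in Lemma \ref{lemmabelow}. Since $|B(X,Y)|^2\geq 0$ and the pulled-back curvature is bounded above by $K(\phi(q))(|X|^2|Y|^2+|\langle X,Y\rangle|^2)$ by the very definition of $K$, the upper inequality follows at once.

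For the lower bound Gauss--Codazzi is no longer sufficient, because we have no lower bound on the bisectional curvature of $\hat M$. My plan is to combine the just-established upper bound on $M$'s own bisectional curvature with the intrinsic trace identity
\[
\frac{1}{2}R(\omega)(q)=\sum_{i=1}^n R(e_i,\bar e_i,\bar e_i,e_i)+2\sum_{1\leq i<j\leq n} R(e_i,\bar e_i,\bar e_j,e_j),
\]
valid in any unitary frame $\{e_i\}$ of $T^{1,0}_q M$. By scaling I may assume $|X|=|Y|=1$, and by choosing the frame I may take $e_1=X$ and $Y=\alpha e_1+\beta e_2$ with $\beta\geq 0$ real. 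Expanding $R(X,\bar X,\bar Y,Y)$ in this frame, the desired inequality becomes the positive semidefiniteness of the $2\times 2$ Hermitian matrix
\[
D=\begin{pmatrix}R_1-2C_0 & \bar c\\ c & R_{12}-C_0\end{pmatrix},
\]
where $R_1=R(e_1,\bar e_1,\bar e_1,e_1)$, $R_{12}=R(e_1,\bar e_1,\bar e_2,e_2)$, $c=R(e_1,\bar e_1,\bar e_2,e_1)$, and $C_0=\frac{1}{4}R(\omega)(q)-\frac{n^2+n-2}{2}K(\phi(q))$.

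Diagonal positivity $R_1\geq 2C_0$ and $R_{12}\geq C_0$ comes from isolating the corresponding term in the trace identity and using the upper bounds $R(e_i,\bar e_i,\bar e_i,e_i)\leq 2K$ and $R(e_i,\bar e_i,\bar e_j,e_j)\leq K$ on the remaining terms. For the determinant condition $(R_1-2C_0)(R_{12}-C_0)\geq|c|^2$ I would observe that the Hermitian form $Z\mapsto K(|Z|^2+|\langle e_1,Z\rangle|^2)-R(e_1,\bar e_1,\bar Z,Z)$ is positive semidefinite (it is exactly the gap in the upper bound), so its $2\times 2$ Cauchy--Schwarz gives $|c|^2\leq(2K-R_1)(K-R_{12})$. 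Isolating the combination $R_1+2R_{12}$ in the same trace identity yields the coupled estimate $R_1+2R_{12}\geq 2K+2C_0$, and the elementary algebraic identity $ab-(2L-a)(L-b)=L(a+2b-2L)$ applied with $a=R_1-2C_0$, $b=R_{12}-C_0$, $L=K-C_0$ then produces the chain $(R_1-2C_0)(R_{12}-C_0)\geq(2K-R_1)(K-R_{12})\geq|c|^2$. I expect the determinant step to be the main obstacle: the individual lower bounds on $R_1$ and $R_{12}$ are sharp in isolation and cannot by themselves control the off-diagonal $c$, so the coupled lower bound on $R_1+2R_{12}$ is essential for closing the argument.
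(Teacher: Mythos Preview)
Your argument is correct, but the paper's proof takes a different and considerably shorter route. Instead of fixing a frame and reducing to a $2\times 2$ positivity, the paper works globally on the symmetric square $S^2(T^{1,0}_qM)$: the map $(X\otimes Y,Z\otimes W)\mapsto R(X,\bar Z,\bar W,Y)$ induces a Hermitian form $H$, and the map $(X\otimes Y,Z\otimes W)\mapsto \langle X,Z\rangle\langle Y,W\rangle+\langle X,W\rangle\langle Y,Z\rangle$ induces an inner product $G$; the quantity $R(X,\bar X,\bar Y,Y)/(|X|^2|Y|^2+|\langle X,Y\rangle|^2)$ is then precisely a Rayleigh quotient for the $G$-selfadjoint endomorphism $H^\#$. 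The upper bound says every eigenvalue of $H^\#$ is at most $K(\phi(q))$, and a direct trace computation gives $\tr H^\#=\tfrac14 R(\omega)(q)$. Since $\dim S^2(T^{1,0}_qM)=\tfrac{n(n+1)}{2}$, the elementary bound $\lambda_{\min}\geq \tr H^\#-(\dim-1)\lambda_{\max}$ yields the lower inequality in one line.

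Your frame-based approach recovers the same bound by hand: the diagonal estimates, the coupled bound $R_1+2R_{12}\geq 2K+2C_0$, and the Cauchy--Schwarz $|c|^2\leq(2K-R_1)(K-R_{12})$ are all correct, and the algebraic identity $ab-(2L-a)(L-b)=L(a+2b-2L)$ together with $K\geq C_0$ (which follows from $\tfrac14 R(\omega)\leq \tfrac{n(n+1)}{2}K$) closes the determinant check. The advantage of your route is that it avoids introducing the symmetric square and stays at the level of ordinary curvature components; the price is the somewhat delicate determinant step. The paper's approach is more conceptual and would adapt immediately to any analogous ``upper bound on all eigenvalues plus trace control implies lower bound'' situation, whereas your $2\times 2$ reduction is tailored to the specific bilinear structure here.
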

\begin{proof}
We denote by $R$ and $\hat R$ the Riemannian tensor of $\omega$ and $\hat\omega$ respectively. We also denote by $B$ the second fundamental form of $\phi:(M,\omega)\rightarrow (\hat M,\hat\omega)$, which can be  treated as a smooth section of $\phi^*(T^{1,0}\hat M)\otimes (\Lambda^{1,0}M\otimes \Lambda^{1,0}M)$. 
Given a $q\in M$, by the Gauss-Codazzi equation, we have for any $X,Y\in T_q^{1,0}M$
\begin{equation}
R(X,\bar X,\bar Y,Y)=(\phi^*\hat R)(X,\bar X,\bar Y,Y)-\langle B(X,Y),B(X,Y)\rangle.
\end{equation}
Using the the definition of $K$, we have for any $X,Y\in T_q^{1,0}M$
\begin{equation}\label{rclbeq1}
R(X,\bar X,\bar Y,Y)\leq K(\phi(q))(|X|^2|Y|^2+|\langle X,Y\rangle|^2).
\end{equation}
We define $S^2(T^{1,0}_qM)$ to be the symmetric square of $T^{1,0}_qM$.  The map
\begin{equation}
(X\otimes Y,Z\otimes W)\mapsto R(X,\bar Z,\bar W,Y),
\end{equation}
induces a Hermitian $2$-form on $S^2(T^{1,0}_qM)$, which we denote by $H$. At the same time, the map
\begin{equation}
(X\otimes Y,Z\otimes W)\mapsto \langle X, Z\rangle \langle Y, W\rangle+ \langle X, W\rangle  \langle Y, Z\rangle.
\end{equation}
induces an inner product on $S^2(T^{1,0}_qM)$, which we denote by $G$. Let $H^{\#}$ be the linear endomorpism of $S^2(T^{1,0}_qM)$ defined by
\begin{equation}
G(H^{\#}u,v)=H(u,v),\qquad \forall u,v\in S^2(T^{1,0}_qM).
\end{equation}
Then $H^{\#}$ is Hermitian with respect to $G$, and
\begin{equation}
\tr H^{\#}=\frac{1}{4}R(\omega)(q).
\end{equation}
Furthermore, we have by \eqref{rclbeq1}
\begin{equation}
H^{\#}\leq K(\phi(q))\Id.
\end{equation}
Therefore we have
\begin{equation}
H^{\#}\geq\left(\frac{1}{4}R(\omega)(q)-\frac{n^2+n-2}{2}K(\phi(q))\right)\Id,
\end{equation}
which indicates that
\begin{equation}
R(X,\bar X,\bar Y,Y)\geq \left(\frac{1}{4}R(\omega)(q)-\frac{n^2+n-2}{2}K(\phi(q))\right)(|X|^2|Y|^2+|\langle X,Y\rangle|^2),
\end{equation}
for any $X,Y\in T^{1,0}_qM$.
\end{proof}

In particular it follows from  Lemma \ref{rclb} that, if $\hat M$ is compact and $R(\omega)\geq R_0$ for some constant, then the Ricci curvature $Ric(\omega)\geq -2n(n+1)K_1\omega$ for some constant $K_1$ depending only on $n, R_0$ and $(\hat M,\hat\omega)$. By this fact, Proposition \ref{Grmv} and Proposition \ref{AA}, we have
\begin{cor}\label{lmtls}
Under the condition of Theorem \ref{cptness2}, there exists a subsequence of $\{(M_i,\omega_i,p_i,\phi_i)\}$ which converges in the pointed Gromov Hausdorff sense to some $(X_\infty,d_\infty,p_\infty,\phi_\infty)$ consisting of a pointed boundedly compact length space $(X_\infty,d_\infty,p_\infty)$ and a $1$-Lipschitz map $\phi_\infty$.
\end{cor}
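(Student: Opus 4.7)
The plan is to assemble three ingredients already at our disposal: the intrinsic curvature estimate of Lemma \ref{rclb}, the Gromov precompactness theorem (Proposition \ref{Grmv}), and the Arzel\`a-Ascoli lemma for pointed Gromov-Hausdorff convergence of maps (Proposition \ref{AA}).

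First, I would establish a uniform Ricci lower bound on the family $(M_i,\omega_i)$. Since $\hat M$ is compact, the function $K$ defined in \eqref{dfek0} is continuous and hence bounded, so $K_1:=\sup_{\hat M}K<\infty$. Applying Lemma \ref{rclb} to each $\phi_i$, combined with the standing scalar curvature hypothesis $R(\omega_i)\ge R_0$, yields a pointwise two-sided bound on the holomorphic bisectional curvature of $(M_i,\omega_i)$, uniform in $i$. Tracing the lower bound over an orthonormal basis of $T^{1,0}M_i$ converts it to a uniform Ricci lower bound of the shape $Ric(\omega_i)\ge -(n-1)R_1\,\omega_i$, with $R_1$ depending only on $n$, $R_0$, and $K_1$.

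Second, with this uniform Ricci bound in hand I would invoke Proposition \ref{Grmv} to extract a subsequence of $\{(M_i,\omega_i,p_i)\}$ converging in the pointed Gromov-Hausdorff sense to a pointed boundedly compact length space $(X_\infty,d_\infty,p_\infty)$. Third, each $\phi_i$ satisfies $\phi_i^*\hat\omega=\omega_i$ and therefore preserves lengths of curves, so $\phi_i$ is globally $1$-Lipschitz as a map of underlying metric spaces; in particular the sequence $\{\phi_i\}$ is automatically approximately $1$-Lipschitz. Regarding $\{\phi_i\}$ as a sequence of maps of type (A) into the fixed pointed target $(\hat M,\hat\omega,\hat q)$ for an arbitrary $\hat q\in\hat M$, the compactness of $\hat M$ ensures $\phi_i(p_i)\in\bar B(\hat q,\diam\hat M)$ for every $i$. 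Proposition \ref{AA} then produces, after a further diagonal extraction, a $1$-Lipschitz limit $\phi_\infty:(X_\infty,p_\infty)\to(\hat M,\hat\omega)$, completing the four-tuple.

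The only substantive point is the first step: a uniform scalar curvature lower bound alone does not control Ricci, but the compactness of $\hat M$ provides an ambient curvature upper bound which, via the Gauss-Codazzi equation encoded in Lemma \ref{rclb}, upgrades the intrinsic scalar lower bound to a pointwise bisectional curvature lower bound, and hence to the Ricci bound needed to feed Gromov's theorem. Steps 2 and 3 are then essentially mechanical applications of the general precompactness statements recalled in Section \ref{prel}.
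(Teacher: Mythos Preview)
Your proposal is correct and follows precisely the argument the paper gives just before stating the corollary: compactness of $\hat M$ bounds $K$ above, Lemma \ref{rclb} then upgrades the scalar lower bound to a uniform Ricci lower bound, Proposition \ref{Grmv} yields a convergent subsequence of pointed spaces, and Proposition \ref{AA} applied to the $1$-Lipschitz maps $\phi_i$ into the compact target $(\hat M,\hat\omega)$ produces the limit map $\phi_\infty$. Your write-up is in fact more detailed than the paper's one-sentence justification, but the route is identical.
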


Our task is then to study  the regularity and the various riemannian properties of the pointed Gromov-Hausdorff limit $(X,d, p,\phi)$  we just obtained as a limit of a sequence $\{(M_i,\omega_i,p_i,\phi_i)\}$ of type $\mathcal K(n,R_0,\hat M,\hat\omega)$. 
This is achieved by the following

\begin{prop}\label{regularity}
$X$ admits a natural structure of a complex manifold such that $\phi$ is a holomorphic immersion and $d$ coincides with the distance induced by $\phi^*\hat\omega$.
\end{prop}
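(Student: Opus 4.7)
The plan is to equip $X$ with a complex atlas by extracting subsequential limits of the local charts produced by Proposition \ref{lccs} on each $M_i$, then verify that these limit charts are holomorphically compatible, that $\phi$ becomes a holomorphic immersion in them, and that the induced length distance recovers $d$.

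Fix $q_\infty \in X$ and choose $q_i \in M_i$ with $q_i \to q_\infty$ in the pointed Gromov-Hausdorff sense. Since $\hat M$ is compact, the parameters $r_0, C_0, K_0$ in Proposition \ref{lccs} can be taken uniform in the base point; applying that proposition around $\hat p := \phi_\infty(q_\infty)$ gives holomorphic charts $\varphi_i \colon B(q_i,2r) \to \mathbb{C}^n$ with $\varphi_i(q_i)=0$, $|\nabla^2 \varphi_i|\le C$, bi-Lipschitz with constant $2$, and the factorization
\[
\psi \circ \phi_i \circ \varphi_i^{-1}(z) = A_i(z,f_i(z)) + \psi(\phi_i(q_i)),
\]
with $A_i^* A_i = (\hat g_{\bar b a}(\phi_i(q_i)))$ and $|f_i(z)| \le 2C|z|^2$. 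Applying Proposition \ref{AA} to the $2$-Lipschitz inverses $\varphi_i^{-1} \colon B^n_r \to M_i$ yields, along a subsequence, a limit $\Phi \colon B^n_r \to X$ satisfying $\tfrac{1}{2}|z_1-z_2| \le d(\Phi(z_1),\Phi(z_2)) \le 2|z_1-z_2|$, hence a homeomorphism onto its image. The matrices $A_i$ are uniformly bounded (since $A_i^* A_i \le 2 I_N$) so, extracting further, $A_i \to A_\infty$ with $A_\infty$ invertible, while Montel's theorem extracts a holomorphic limit $f_\infty$ on $B^n_r$ with $|f_\infty(z)| \le 2C|z|^2$. Passing to the limit in the factorization,
\[
\psi \circ \phi_\infty \circ \Phi(z) = A_\infty(z, f_\infty(z)) + \psi(\phi_\infty(q_\infty)),
\]
so $\phi_\infty \circ \Phi$ is a holomorphic immersion in $z$.

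Performing this construction at every point yields an atlas $\{\Phi_\alpha\}$ on $X$; I would check holomorphic compatibility as follows. Write $F_\alpha := \psi \circ \phi_\infty \circ \Phi_\alpha$. Since $F_\alpha$ is a holomorphic immersion, some $n \times n$ minor of $dF_\alpha$ is nonvanishing near any chosen base point, yielding a local holomorphic left inverse $G_\alpha$ for $F_\alpha$. Whenever $\Phi_\alpha(z) = \Phi_\beta(w)$ one has $F_\alpha(z) = F_\beta(w)$, whence
\[
\Phi_\alpha^{-1} \circ \Phi_\beta = G_\alpha \circ F_\beta
\]
locally, which is holomorphic. The same factorization shows that $\phi_\infty$ is holomorphic in these coordinates, and the rank condition makes it an immersion.

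It remains to identify $d$ with the length distance $\tilde d$ induced on $X$ by the (now well-defined) pullback $\phi_\infty^* \hat\omega$. Cauchy estimates applied to the uniformly bounded holomorphic $f_i$ upgrade $f_i \to f_\infty$ to $C^k_{\mathrm{loc}}$ convergence, so $(\varphi_i^{-1})^* \omega_i$ converges in $C^0_{\mathrm{loc}}$ to $\Phi^* \phi_\infty^* \hat\omega$ on $B^n_r$. Consequently, lengths of piecewise smooth curves measured with $\omega_i$ converge uniformly on equi-Lipschitz families to lengths measured with $\phi_\infty^* \hat\omega$, while the Gromov-Hausdorff convergence forces $d_i(\varphi_i^{-1}(z_1),\varphi_i^{-1}(z_2)) \to d(\Phi(z_1),\Phi(z_2))$. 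Hence $d = \tilde d$ on each chart image, and since both are length distances the equality propagates globally. The main obstacle is precisely this last identification: reconciling the length structure inherited by the Gromov-Hausdorff limit with the one defined by the limiting K\"ahler form requires careful combination of the $C^0_{\mathrm{loc}}$ convergence of metric tensors with the bi-Lipschitz property in order to control the infimum of curve lengths from both sides.
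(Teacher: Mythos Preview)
Your proposal is correct and follows essentially the same route as the paper: apply Proposition~\ref{lccs} on each $M_i$, extract subsequential limits of the chart data $(A_i,f_i)$ and of the parametrizations $\varphi_i^{-1}$ via Arzel\`a--Ascoli and Montel, verify the factorization passes to the limit, and then argue that the local length structures agree. The paper organizes this into a separate Lemma~\ref{lcappr} (which produces for each $q\in X$ the chart $\varphi$, the holomorphic embedding $\eta$, and the exact local isometry $d_{\tilde\omega}(\varphi(q'),\varphi(q''))=d(q',q'')$), and then the proof of Proposition~\ref{regularity} is a short assembly step; your write-up merges the two. For transition compatibility the paper phrases the argument slightly differently---it observes that $\phi(V)$ is an $n$-dimensional complex submanifold of $\hat M$ and factors both chart maps through the biholomorphism $\phi|_V$---but this is the same implicit-function-theorem content as your local left inverse $G_\alpha$. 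For the identification $d=d_{\phi^*\hat\omega}$, the paper makes explicit the step you flag as the ``main obstacle'': the bi-Lipschitz bounds (Proposition~\ref{lccs}(a)) guarantee that metric balls $\bar B_{\tilde\omega_{l_k}}(0,s)$ are relatively compact in $B^n_{2r}$, hence minimizing geodesics between points of $\bar B(q_{l_k},s)$ stay in the chart and $d_{M_{l_k}}=d_{\tilde\omega_{l_k}}$ there; passing to the limit gives the local equality, and both metrics being intrinsic gives the global one.
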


To prove Proposition \ref{regularity}, we need the following 
\begin{lem}\label{lcappr}
For any $q\in X$, there exists an $r>0$,  an open embedding map $\varphi:B(q,r)\rightarrow\mathbb \mathbb \mathbb C^n$ with $\varphi(0)=0$ and $\varphi(B(p,r))\subset B^n_r$, a holomorphic chart map $\psi:B(\phi(q),8r)\rightarrow \mathbb C^N$ and a holomorphic embedding $\eta:B^n_{2r}\rightarrow \mathbb C^N$ with $\eta(0)=0$, such that
\begin{enumerate}
\item $\psi\circ\phi\circ\varphi^{-1}=\eta$ on $\varphi(B(q,r))$.
\item Let $\tilde \omega=(\psi^{-1}\circ\eta)^*\hat\omega$ and $d_{\tilde\omega}$ the induced distance on $(B^n_{4r},\tilde\omega)$, then $\varphi(B(q,r))=B_{\tilde\omega}(0,r)$ and for any $q',q''\in B(q,r)$, it holds that
\begin{equation}
d_{\tilde\omega}(\varphi(q'),\varphi(q''))=d(q',q'').
\end{equation}
\end{enumerate}
Furthermore, there exists subsequence $\{l_k\}$ of $\{1,2,\cdots\}$ and for each $k\geq 1$ a point $q_{l_k}\in M_{l_k}$ with $|d(q_{l_k},p_{l_k})-d(q,p)|\leq 4^{-k}r$, an open neighborhood $U_{l_k}\subset B(q_{l_k},4r)$ of $q_{l_k}$  and a holomorphic chart map $\varphi_{l_k}:U_{l_k}\rightarrow \mathbb C^n$ with $\varphi_{l_k}(0)=0$ and $\varphi_{l_k}(U_{l_k})=B^n_{2r}$, such that
\begin{enumerate}\setcounter{enumi}{2}
\item  On $B^n_{2r}$, $\{\psi\circ\phi_{l_k}\circ\varphi_{l_k}^{-1}\}$ and $\{\varphi_{l_k}^*\omega_{l_k}\}$ locally converge to $\eta$ and $\tilde\omega$ in $\mathcal C^\infty$ topology respectively.
\end{enumerate}
\end{lem}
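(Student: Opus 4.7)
The plan is to lift the content of Proposition \ref{lccs} from the sequence $\{M_i\}$ to the limit $X_\infty$. First I would select approximating points $q_i\in M_i$ via the Gromov-Hausdorff maps $\Phi_i:X_\infty\to M_i$, noting that the approximate $1$-Lipschitz property of $\{\phi_i\}$ forces $\phi_i(q_i)\to\phi(q)$ in $\hat M$. Fixing a holomorphic chart $\psi$ around a point $\hat p$ with $\phi(q)\in B(\hat p,r)$ satisfying the conditions of Section \ref{chcm}, Proposition \ref{lccs} applies at each $q_i$ for $i$ large and yields holomorphic charts $\varphi_i$ on $B(q_i,2\rho)\subset M_i$, matrices $A_i$ with $A_i^*A_i=(\hat g_{\bar b a}(\phi_i(q_i)))$, and holomorphic maps $f_i$ satisfying the polynomial factorization $\psi\circ\phi_i\circ\varphi_i^{-1}(z)=A_i(z,f_i(z))+\psi(\phi_i(q_i))$ with the universal bound $|f_i(z)|\le 2C|z|^2$. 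I would then pick $r$ small enough that $B^n_{2r}\subset\varphi_i(B(q_i,2\rho))$ uniformly in $i$, so that the inverses $\varphi_i^{-1}$ are defined on the fixed domain $B^n_{2r}$.

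Next I would extract convergent subsequences. The matrices $\{A_i\}$ are uniformly bounded (since $A_i^*A_i$ converges to the fixed $(\hat g_{\bar b a}(\phi(q)))$), while the $f_i$ are holomorphic on $B^n_{2r}$ and dominated by $2C|z|^2$; Montel's theorem combined with this boundedness produces a subsequence $\{l_k\}$ along which $A_{l_k}\to A_\infty$ and $f_{l_k}\to f_\infty$ in $\mathcal C^\infty_{loc}(B^n_{2r})$. Setting $\eta(z)=A_\infty(z,f_\infty(z))$ and $\tilde\omega=(\psi^{-1}\circ\eta)^*\hat\omega$, the map $\eta$ is a holomorphic immersion at $0$ (its differential is $z\mapsto A_\infty(z,0)$, injective since $df_\infty(0)=0$ and $A_\infty$ is invertible) and, after shrinking $r$, a holomorphic embedding of $B^n_{2r}$. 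The pullbacks $\varphi_{l_k}^*\omega_{l_k}$ then converge in $\mathcal C^\infty_{loc}$ to $\tilde\omega$, yielding conclusion (3). Applying Arzel\`a-Ascoli (Proposition \ref{AA}) to the $2$-Lipschitz inverses $\varphi_{l_k}^{-1}:B^n_{2r}\to M_{l_k}$ composed with the GH-approximations $\Psi_{l_k}:M_{l_k}\to X_\infty$, a further diagonal extraction produces a limiting Lipschitz map $\sigma:B^n_{2r}\to X_\infty$ with $\sigma(0)=q$; the lower bi-Lipschitz bound from Proposition \ref{lccs}(a) passes to the limit, so $\sigma$ is a bi-Lipschitz open embedding, and I would define $\varphi=\sigma^{-1}$ on its image, shrinking to $\varphi(B(q,r))=B_{\tilde\omega}(0,r)$.

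The final step is to upgrade this bi-Lipschitz identification to an isometry and verify the factorization. For any $q',q''\in B(q,r)$, set $q'_{l_k}=\varphi_{l_k}^{-1}(\varphi(q'))$ and $q''_{l_k}=\varphi_{l_k}^{-1}(\varphi(q''))$; Gromov-Hausdorff convergence of $\{(M_{l_k},\omega_{l_k},p_{l_k})\}$ to $(X_\infty,d,p_\infty)$ gives $d_{\omega_{l_k}}(q'_{l_k},q''_{l_k})\to d(q',q'')$, while $\mathcal C^\infty_{loc}$-convergence of $\varphi_{l_k}^*\omega_{l_k}\to\tilde\omega$ gives $d_{\omega_{l_k}}(q'_{l_k},q''_{l_k})\to d_{\tilde\omega}(\varphi(q'),\varphi(q''))$, yielding the identity in conclusion (2). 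Passing to the limit in the factorization $\psi\circ\phi_{l_k}\circ\varphi_{l_k}^{-1}(z)=A_{l_k}(z,f_{l_k}(z))+\psi(\phi_{l_k}(q_{l_k}))$ then gives conclusion (1). The main obstacle is that Gromov-Hausdorff limits are a priori only length spaces, so the candidate chart $\varphi$ initially exists only as a bi-Lipschitz homeomorphism; the complex-analytic structure must be imported through the image data $(\eta,\tilde\omega)$. The crucial ingredients making this possible are the uniform Hessian bound and the explicit polynomial factorization provided by Proposition \ref{lccs}, which together force $\tilde\omega$ to be smooth and $\eta$ to be a genuine holomorphic embedding rather than a mere continuous map.
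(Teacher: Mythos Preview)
Your proposal is correct and follows essentially the same strategy as the paper: apply Proposition \ref{lccs} at the approximating points $q_i\in M_i$, extract $\mathcal C^\infty$-subconvergent subsequences of $(A_i,f_i)$ via Montel/compactness to build $\eta$ and $\tilde\omega$, then use Arzel\`a--Ascoli together with the uniform bi-Lipschitz bounds of Proposition \ref{lccs}(a) to produce the chart $\varphi$ on $B(q,r)$ and verify (1)--(3). The only tactical difference is directional: the paper takes the Arzel\`a--Ascoli limit of $\sigma_{l_k}=\varphi_{l_k}\circ\Phi_{l_k}:\bar B(q,r)\to\mathbb C^n$ to obtain $\varphi$ directly as an isometry onto $\bar B_{\tilde\omega}(0,r)$, whereas you pass to the limit of $\Psi_{l_k}\circ\varphi_{l_k}^{-1}:\mathbb C^n\to X$ and then invert---both routes are equivalent.
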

Each of the statement of the above Lemma will serve a very clear scope in the process of proving Proposition \ref{regularity} and in turn Theorem \ref{cptness2}: indeed, part (1) will imply that $X$ is indeed a smooth complex manifold and $\phi$ is indeed a holomorphic immersion, while (2) will be the key to show 
that $(X,d)$ is indeed a K\"ahler manifold and $\phi$  indeed preserves the metric (namely $d$ coincides with the distance induced by $\phi^*\hat\omega$, as explained below). As for (3), it will allow us to obtain certain riemannian bounds of the limit space. For example, in the proof of Theorem \ref{cptness2}, it will allow  the limit space to have the same (scalar) curvature bounds as the the $(M_i, \omega_i)$, and in the proof of Corollary \ref{euplane}, it will also allow the limit space to have have non-negative Ricci curvature.

\begin{proof}[Proof Proposition \ref{regularity}]
Clearly $X$ is Hausdorff and seperable.
First endow $X$ with a suitable holomorphically compatible atlas.
Let $\mathcal A$ be the set of all pairs $(U,\varphi)$ of a nonempty set $U$ together with a map $\varphi:U\rightarrow \mathbb C^n$ which satisfy the condition that $\phi$ is chart map and $\phi\circ\varphi^{-1}$ is a holomorphic embedding. 
By Lemma \ref{lcappr} (1),for every $q\in X$, we can find a $(U,\varphi)\in\mathcal A$ with $U=B(q,r)$ for some $r>0$. Therefore we have $\bigcup_{(U,\varphi)\in\mathcal A}U=X$. At the same time, if $(U_1,\varphi_1),(U_2,\varphi_2)\in\mathcal A$ and $V=U_1\cap U_2\neq\emptyset$, then $\phi(V)$ is a complex submanifold of $\hat M$ of dimension $n$. Clearly $\varphi_1\circ(\phi|_V)^{-1}$ and $\varphi_2\circ(\phi|_V)^{-1}$ are two holomorphic chart map on $V$, so $\varphi_1\circ\varphi_2^{-1}=\left(\varphi_1\circ(\phi|_V)^{-1}\right)\circ\left(\varphi_2\circ(\phi|_V)^{-1}\right)^{-1}$ is holomorphic on $\varphi_2(V)$. Therefore $\mathcal A$ is a complex atlas of $X$.

Next we show that $\phi$ is a holomorphic immersion. This is evident since for any $(U,\varphi)\in \mathcal A$, $\phi\circ\varphi^{-1}$ is a holomorphic immersion.

Finally we show that the metric $d$ on $X$ coincides with the metric $d_{\phi^*\hat\omega}$ induced by $\phi^*\hat \omega$ on $X$. According to Lemma \ref{lcappr} (2), for any $q\in M$, there exists an $r>0$ such that for  any $q',q''\in B(q,r)$
\begin{equation}
d(q',q'')=d_{\phi^*\hat\omega}(q',q'').
\end{equation}
Noting that $d$ and $d_{\phi^*\hat\omega}$ are intrinsic, we have $d=d_{\phi^*\hat\omega}$.
\end{proof}

\begin{proof}[Proof of Lemma \ref{lcappr}]

Given $\hat q=\phi(q)$, we choose $\hat q=\phi(q)$,  $r_0>0$, a holomorphic chart map $\psi=(w^1,\cdots,w^N)$ on $(\hat q,4r_0)$, $C_0\geq 0$ and $K_1$ as in Definitions  \eqref{defconstants},  \eqref{Hesspsi} and  \eqref{defconstants2} in Section 3.
We also  set $r_1=d(q,p)+r_0$.

Next we choose an approximating sequence of $(\bar B(q,4r),d, q,\phi|_{\bar B(q,4r)})$.
Since $(X,d, p,\phi)$ is the limit of $\{(M_l,\omega_l,p_l,\phi_l)\}$, we can choose a subsequence $\{l_k\}$ of $\{1,2,\cdots\}$, and  for each $k$, a map $\Phi_{l_k}:\bar B(p_{l_k}, r_1)\rightarrow M_{l_k}$ such that
\begin{itemize}
\item $B(\Phi_{l_k}(\bar B(p, r_1)),4^{-k-1}r)\supset \bar B(\Phi_{l_k}(p),r_1)$ and it holds that for any $q',q''\in \bar B(q,r_1)$
\begin{equation}
|d(\Phi_{l_k}(q'),\Phi_{l_k}(q''))-d(q',q'')|<4^{-k-1}r.
\end{equation}
\item For any $q'\in \bar B(p,r_1)$, it holds that
\begin{equation}
d(\phi_{l_k}\circ\Phi_{l_k}(q'),\phi(q'))<4^{-k-1}r.
\end{equation}
\end{itemize}
Let $q_{l_k}=\Phi_{l_k}(q)$ for each $k$. One can easily check that
\begin{enumerate}[label=(\roman*)]
\item $|d(q_{l_k},p_{l_k})-d(q,p)|<4^{-k-1}r$ and $d(\phi_{l_k}(q),\hat q)<4^{-k-1}r$. 
\end{enumerate}
And for any $s\in(0,8r]$\begin{equation}
\Phi_{l_k}(\bar B(q,s)),4^{-k}r)\supset \bar B(q_{l_k},s)
\end{equation}

Third we construct a holomorphic embedding $\eta:B^n_{2r}\rightarrow\mathbb C^N$.
Since $(M_{l_k},\omega_{l_k})$ is complete and $\phi_{l_k}$ is a holomorphic isometry, $B(q_{l_k},8r)$ is relatively-compact in $M_{l_k}$ and $\phi_{l_k}(B(q_{l_k},8r))\subset B(\hat q,r_0)$. At the same time, $\inf_{B(q_{l_k},8r)}R(\omega_{l_k})\geq R_0$. Applying Proposition \ref{lccs}, we can find an open neighborhood $U_{l_k}$ of $q_{l_k}$ and a chart map $\varphi_{l_k}$ on $U_{l_k}$, such that
\begin{enumerate}[label=(\roman*)]\setcounter{enumi}{1}
\item $\varphi_{l_k}(q_{l_k})=0$, $\varphi_{l_k}(U_{l_k})=B^n_{2r}$ and $\psi\circ\phi_{l_k}\circ\varphi_{l_k}^{-1}(z)=A_{l_k}(z,f_{l_k}(z))+\psi(\phi(q_{l_k}))$ for some $N$-order matrix $A_{l_k}$ with ${A_{l_k}}^*A_{l_k}=(g_{\bar b a}(\phi_{l_k}(q_{l_k})))$ and holomorphic map $f_{l_k}:B^n_r\rightarrow \mathbb C^{N-n}$ with $|f_{l_k}(z)|\leq (16r)^{-1}|z|^2$.
\item\label{estw} The metric $\tilde\omega_{l_k}=(\varphi_{l_k}^{-1})^*\omega_{l_k}$ satisfies
$\frac{\iu}{4}\sum\limits_{i=1}^ndz^i\wedge d\bar z^i\leq\tilde\omega_{l_k}\leq \iu\sum\limits_{i=1}^ndz^i\wedge d\bar z^i$.
\end{enumerate}
Since ${A_{l_k}}^*A_{l_k}\rightarrow I_N$ and $|f_{l_k}|\leq (16r)^{-1}|z|^2$, we can find a subsequence $\{l'_k\}$ of $\{l_k\}$, such that
\begin{enumerate}[label=(\roman*)]\setcounter{enumi}{3}
\item $A_{l_k}$ converges to an $N$-order unitary matrix $A$ and $f_{l'_k}$ locally converges in $\mathcal C^{\infty}$ topology to a holomorphic map $f:B^n_{4r}\rightarrow \mathbb C^n$ with $|f(z)|\leq (16r)^{-1}|z|^2$.
\end{enumerate}
Let $\eta:B^n_{2r}\rightarrow B^N_{4r}$ be the map defined by $\eta(z)=A(z,f(z))$, $\tilde\omega=(\psi^{-1}\circ\eta)^*\hat\omega$. By the choices of $A$ and $f$, together with the fact that $\psi(\phi_{l'_k}(q_{l'_k}))$ converges to $\psi(\hat q)=0$, we have
\begin{itemize}
\item $\psi\circ\phi_{l'_k}\circ\varphi_{l'_k}^{-1}$ locally  converges to $\eta$ in $\mathcal C^\infty$ topology.
\item $\tilde\omega_{l'_k}$ locally converges to $\tilde\omega$ in $\mathcal C^\infty$ topology 
(this is due to
the fact that $\varphi_{l'_k}$ is a holomoprhic isometry).
\end{itemize}
Together with \ref{estw}, we have
\begin{enumerate}[label=(\roman*)]\setcounter{enumi}{4}
\item\label{estw1} $\frac{\iu}{4}\sum\limits_{i=1}^ndz^i\wedge d\bar z^i\leq\tilde\omega\leq \iu\sum\limits_{i=1}^ndz^i\wedge d\bar z^i$.
\end{enumerate}

Fourth we construct an embedding map $\varphi:B(p,r)\rightarrow \mathbb C^n$.
Let $d_{\tilde\omega}$ be the induced metric on $(B^n_{2r},\tilde\omega)$ and $d_{\tilde\omega_{l_k}}$  the induced metric on $(B^n_{2r},\tilde\omega_{l_k})$. By \ref{estw} and \ref{estw1}, when $s\in (0,\sqrt{2})$, the metric balls $B_{\tilde\omega}(0,s)$ in $(B^n_{2r},\tilde\omega)$ and $B_{\tilde\omega_{l_k}}(0,s)$  in $(B^n_{2r},\tilde\omega_{l_k})$ are relatively-compact in $B^n_{2r}$. Together with the fact that $\varphi_{l,k}:(U_{l_k},\omega_{l_k})\rightarrow (B^n_{2r},\tilde\omega_{l_k})$ is a holomorophic isometry,  we have $\varphi_{l_k}(\bar B(q_{l_k},s))=\bar B_{\tilde\omega_{l_k}}(0,s)$ and  for any $q',q''\in \bar B(q_{l_k},s)$
\begin{equation}
d_{\tilde\omega_{l_k}}(\varphi_{l_k}(q'),\varphi_{l_k}(q''))=d(q',q'').
\end{equation}
Consider the map $\sigma_{l_k}=\varphi_{l_k}\circ\Phi_{l_k}$ defined on $\bar B(q,r)$. One can easily check that
\begin{itemize}
\item $\sigma_{l_k}(q)=0$ and $B_{\tilde\omega_{l_k}}(\sigma_{l_k}(\bar B(q,r)),4^{-k}r)\supset \bar B_{\tilde\omega_{l_k}}(0,r)$.
\item For any $q',q''\in \bar B(q,r)$, it holds that
\begin{equation}
|d_{\tilde\omega_{l_k}}(\Phi_{l_k}(q'),\Phi_{l_k}(q''))-d(q',q'')|<4^{-k-1}r.
\end{equation}
\end{itemize}
Since $\tilde\omega_{l'_k}$ locally converges to $\tilde\omega$ in $\mathcal C^\infty$ topology, we can find a positive sequence $\epsilon_k\rightarrow 0$, such that 
\begin{itemize}
\item $B_{\tilde\omega}(\sigma_{l'_k}(\bar B(q,r)),\epsilon_k)\supset \bar B_{\tilde\omega}(0,r)$.
\item For any $q',q''\in \bar B(q,2r)$, it holds that
\begin{equation}
|d_{\tilde\omega}(\sigma_{l'_k}(q'),\sigma_{l'_k}(q''))-d(q',q'')|<\epsilon_k.
\end{equation}
\end{itemize}
Together with the compactness of $\bar B(q,r)$ and applying the Arzel\`a-Ascoli lemma, we can find a subsequence $\{l''_k\}$ of $\{l'_k\}$, such that $\sigma_{l''_k}$ uniformly converges to a map  $\varphi:\bar B(q,r)\rightarrow B^n_{2r}$. Clearly
\begin{enumerate}[label=(\roman*)]\setcounter{enumi}{5}
\item $\varphi(q)=0$, $\varphi(\bar B(q,r))=\bar B_{\tilde\omega}(q,r)$ and for any $q',q''\in B(q,r)$,  it holds that
\begin{equation}
d_{\tilde\omega}(\varphi(q'),\varphi(q''))=d(q',q'').
\end{equation}
\end{enumerate}
So $\varphi|_{B(p,r)}$ is an open embedding, thus a chart map.

Finally we check that
\begin{enumerate}[label=(\roman*)]\setcounter{enumi}{6}
\item $\psi\circ\phi\circ\varphi^{-1}=\eta$ on $\bar B_{\tilde\omega}(0,r)$. 
\end{enumerate}
In fact we have on $\bar B_{\tilde\omega}(0,r)$
\begin{equation}
\psi\circ(\phi_{l''_k}\circ\Phi_{l''_k})\circ\varphi^{-1}=(\psi\circ\phi_{l''_k}\circ\varphi_{l''_k}^{-1})\circ(\varphi_{l''_k}\circ\Phi_{l''_k})\circ\varphi^{-1}.
\end{equation}
Noting that $\phi_{l''_k}\circ\Phi_{l''_k}$ uniformly converges to $\phi$ on $\bar B(q,4r)$, $\psi\circ\phi_{l''_k}\circ\varphi_{l''_k}^{-1}$ locally uniformly converges to $\eta$ on $B^n_{2r}$ and $\varphi_{l''_k}\circ\Phi_{l''_k}=\sigma_{l''_k}$ uniformly converges to $\varphi$ on $\bar B(q,r)$. So we have on $\bar B_{\tilde\omega}(0,r)$
\begin{equation}
\psi\circ\phi\circ\varphi^{-1}=\eta\circ\varphi\circ\varphi^{-1}=\eta.
\end{equation}
This concludes the proof of Lemma \ref{lcappr}.
\end{proof}

Collecting all the above we can complete the proof of our main Theorem:

\begin{proof}[Proof of Theorem \ref{cptness2}]
First, by Corollary \ref{lmtls} we know that $\{(M_i,\omega_i,p_i,\phi_i)\}$ converges in the Gromov-Hausdorff sense to a tuple $(X_\infty,d_{\infty},p_\infty,\phi_{\infty})$ which consists of a pointed  boundedly-compact length space $(X_\infty,d_\infty,p_\infty,\phi_\infty)$ and a $1$-Lipschitz map $\phi:(X_\infty,d_\infty)\rightarrow (\hat M,\hat\omega)$. Secondly, by Proposition \ref{regularity} we can endow $(X_\infty,d_\infty)$ with the structure of a K\"ahler manifold, denoted by $(M_\infty,\omega_{\infty})$, such that the distance induced by $\omega_\infty$ coincides with $d_\infty$, and $\phi:(M_\infty,\omega_\infty)\rightarrow (\hat M,\hat \omega)$ is a holomorphic isometric immersion. Finally, by  Lemma \ref{lcappr} (3) $\omega_{\infty}$ is locally the limit of a sequence $\{\omega_k'\}$ of K\"ahler metrics with $R(\omega_k')\geq R_0$. Therefore $R(\omega_{\infty})\geq R_0$ and consequently $(M_\infty,\omega_\infty,p_\infty,\phi_{\infty})\in\mathcal K(n,R_0,\hat M,\hat\omega)$.

\end{proof}

\section{Proof of Corollary \ref{euplane}} \label{secfive}

Before giving the proof of Corollary \ref{euplane}, we recall the Cheeger-Gromoll splitting theorem (\cite[Theorem 2]{CG71}): a line in a complete Riemannian manifold $(M,g)$ is a geodesic $\gamma:\mathbb R\rightarrow M$ satisfying $|\gamma'|=1$ and $d(t_1,t_2)=|t_1-t_2|$.  If $(M,g)$ is a complete Riemannian manifold of dimension $m\geq 2$ with non-negative Ricci curvature which contains a line, then $(M,g)$ is the product of a Riemannian manifold of dimension $m-1$ and a real line $\mathbb R$. If in addition  $(M,\omega)$ is K\"ahler and $m=2n$, then the universal covering of $(M,\omega)$ is the product of a K\"ahler manifold of complex dimension $n-1$ and a  complex line $\mathbb C$.
\begin{proof}[Proof of Corollary \ref{euplane}]
We need to show if there is a holomorphic isometry $\phi:( M, \omega)\rightarrow (\hat M,\hat\omega)$ with $( M, \omega)$ a complete K\"ahler manifold with non-negative Ricci curvature and then there is holomorphic isometry form a complex line into $(\hat M,\hat\omega)$.

As mentioned in the Introduction, we want to show that by choosing an appropriate sequence $p_i\in M$, $(M,\omega,p_i, \phi)$ converges to some $(M_\infty,\omega_\infty,  p_\infty, \phi_\infty)$ that contains a line. 

Since $(M,\omega)$ is complete and  non-compact, we can find a geodesic $\gamma:\mathbb R\rightarrow M$ such $|\gamma'|=1$ and $\gamma|_{[0,\infty)}$ is a ray in the sense that
\begin{equation}
d(\gamma(t_1),\gamma(t_2))=|t_1-t_2|,\qquad \forall t_1,t_2\in[0,\infty).
\end{equation}
Let $p_i=\gamma(2^i)$, then we get a sequence of pointed K\"ahler manifold $\{(M,\omega,p_i)\}$. We can apply Theorem \ref{cptness2} in both cases considered as the condition $Ric(\omega)\geq (\kappa\circ\rho) \omega$ clearly gives a lower bound on the scalar curvature too, so, up to passing to a subsequence, we can assume that $\{M,\omega, p_i,\phi\}$ converges in the pointed Gromov-Hausdorff sense to a $(M_\infty,\omega_\infty,p_\infty,\phi_\infty )$ which consists of a complete K\"ahler manifold $(M_\infty,\omega_\infty)$ 
 of dimension $n$ (possibly different from $(M,\omega)$) and a holomorphic isometry $\phi_\infty:(M_\infty,\omega_\infty)\rightarrow (\hat M,\hat \omega)$.
 
Note that if $(M, \omega)$ has non-negative Ricci curvature, the same holds for $(M_\infty,\omega_\infty)$ by following the argument which we gave in the proof of Theorem \ref{cptness2} to obtain  the lower bound for the scalar curvature. On the other hand, if $(M, \omega)$ has asymptotically non-negative Ricci curvature, we claim that $Ric(\omega_\infty)\geq 0$ again.
Indeed, let $q_\infty\in M_\infty$, then by  Lemma \ref{lcappr} (3), we can find an $r>0$, a subsequence $\{l_k\}$ of $\{1,2,\cdots\}$, a sequence $\{q_{l_k}\}$ of points in $M$ and a sequence $\mu_k$ of biholomorphic maps  $\mu_k:B(q_\infty,r)\rightarrow U_{l_k}$ with $\mu_k(q_\infty)=q_{l_k}$ and $U_{l_k}\subset B(q_{l_k},4r)$ an open neighborhood of $q_{l_k}$, such that
\begin{enumerate}[label=(\alph*)]
\item $|d(q_{l_k},p_{l_k})-d(q_\infty,p_\infty)|<4^{-k}$.
\item On $B(q_\infty,r)$, $\mu_{k}^*\omega$ converges to $\omega_\infty$ in $\mathcal C^\infty$ topology.
\end{enumerate}
 If $Ric(\omega)\geq 0$, then obviously (b) implies $Ric(\omega_\infty)\geq 0$, while if $Ric(\omega)$ has  asymptotically non-negative curvature, then by (a) we have for $q\in B(q_{l_k},4r)$
\begin{equation}
d(p,q))\geq d(p,p_{l_k})-d(p_{l_k},q_{l_k})-d(q_{l_k},q)\geq 2^{l_k}-1-4r
\end{equation}
Together with $Ric(\omega)\geq(\kappa\circ\rho)\omega$, we have $Ric(\omega)\geq \kappa(2^{l_k}-1-4r)\omega$ on $B(q_{l_k},4r)$ and consequently
\begin{equation}
Ric(\mu_k^*\omega)\geq \kappa(2^{l_k}-1-4r)\mu_k^*\omega.
\end{equation}
on $B(q_\infty,r)$. Using (b) and $\lim _{t\rightarrow\infty} \kappa(t)=0$ we deduce that $Ric(\omega_\infty)\geq 0$.

Consider the sequence  of geodesics $\gamma_i:\mathbb R\rightarrow (M,\omega,p_i)$ defined by $\gamma_i(t)=\gamma(t+2^i)$. Clearly each $\gamma_i$ is $1$-Lipschitz and $\gamma_i(0)=p_i$. By Proposition \ref{AA}, up to passing to a subsequence, we can assume that $\{\gamma_i\}$ converges in the Gromov-Hausdorfff sense to a $1$-Lipschitz map $\gamma_\infty:\mathbb R\rightarrow (M_\infty,\omega_\infty)$ with $\gamma_\infty(0)=p_\infty$. Furthermore, for any $t_1,t_2\in \mathbb R$, when $2^{i}\geq \max\{-t_1,-t_2\}$, we have
\begin{equation}
d(\gamma_i(t_1),\gamma_i(t_2))=|t_1-t_2|.
\end{equation}
Letting $i\rightarrow\infty$, we obtain $d(\gamma_\infty(t_1), \gamma_\infty(t_2))=|t_1-t_2|$. Together with the fact that $\gamma_\infty$ is $1$-Lipschitz, it is a  line.
Since 
$Ric(\omega_\infty)\geq 0$ the Corollary follows  by Cheeger-Gromoll.
\end{proof}

\section{Final remarks}\label{finrem}
Theorem \ref{cptness2} can be easily generalized in two directions, firstly to study immersions into complete K\"ahler ambient spaces $(\hat M,\hat\omega)$, not necessarily compact, and secondly to relax the 
scalar curvature lower bound asking that it satisfies $R(\omega)\geq R_0\circ\rho$, where $R_0$ is a non-increasing function and $\rho$ is the distance function on $M$ from $p$. We believe that this latter could lead to some interesting consequence in looking for uniform versions of Tian's approximation Theorem as mentioned in the Introduction.

The most general result we prove is the following:

 \begin{thm}\label{cptness1}
Let $S\subset \hat M$ be a nonempty compact set. If $\{(M_i,\omega_i,p_i,\phi_i)\} \in \mathcal K(n,R_0,\hat M,\hat\omega)$ and 
$\phi_i(p_i)\in S$, then a subsequence converges to some $(M_\infty,\omega_\infty,p_\infty,\phi_\infty)\in \mathcal K(n,R_0,\hat M,\hat\omega)$, with $\phi_\infty (p_\infty) \in S$.
\end{thm}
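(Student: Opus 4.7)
The approach is to repeat the argument of Theorem \ref{cptness2} essentially verbatim, replacing every appeal to global compactness of $\hat M$ by a localized appeal on compact pieces of $\hat M$ accessible to the sequence. The key observation is that each $\phi_i$ is $1$-Lipschitz (being a holomorphic isometry), so $\phi_i(\bar B(p_i,r))\subset \bar B(S,r)$ for every $r>0$; since $(\hat M,\hat\omega)$ is assumed complete in this more general setting, $\bar B(S,r)$ is compact by Hopf--Rinow. This single fact is what lets every constant in the earlier proof remain uniform in $i$.

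First I would establish the analogue of Corollary \ref{lmtls}. For each $r>0$ set
\begin{equation}
K_r=\sup_{\hat q\in\bar B(S,r+1)}K(\hat q)<\infty,
\end{equation}
with $K$ as in \eqref{dfek0}. Lemma \ref{rclb} then bounds $Ric(\omega_i)$ from below on $B(p_i,r)$ by a constant depending only on $n,R_0,K_r$. Applying Proposition \ref{Grmv} on each such ball and extracting a diagonal subsequence over $r\in\mathbb Z_{>0}$ yields convergence of $\{(M_i,\omega_i,p_i)\}$ in the pointed Gromov--Hausdorff sense to a boundedly compact pointed length space $(X_\infty,d_\infty,p_\infty)$. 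Concurrently, since each $\phi_i$ is $1$-Lipschitz with $\phi_i(p_i)\in S$, Proposition \ref{AA} produces a further subsequence along which $\phi_i\to\phi_\infty$ for some $1$-Lipschitz $\phi_\infty:X_\infty\to\hat M$, and compactness of $S$ forces $\phi_\infty(p_\infty)\in S$.

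Next I would upgrade $(X_\infty,d_\infty)$ to a K\"ahler manifold along the lines of Section \ref{chcm} and Proposition \ref{regularity}. For any $q\in X_\infty$ pick a holomorphic chart $\psi$ on some relatively compact ball $B(\hat q,4r_0)\subset\hat M$ around $\hat q=\phi_\infty(q)$; the constants $C_0,K_0$ of \eqref{defconstants} are finite because their suprema are taken over a compact set. Since $\phi_i\circ\Phi_i(q)\to\hat q$ in $\hat M$, for all large $i$ the image of the relevant small ball in $M_i$ lies inside $B(\hat q,r_0)$, so Proposition \ref{lccs} applies with the same $C$ and $r$ of \eqref{defconstants2} uniformly in $i$ and Lemma \ref{lcappr} carries over unchanged. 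This delivers the K\"ahler structure $\omega_\infty$, the holomorphic isometric immersion $\phi_\infty:(M_\infty,\omega_\infty)\to(\hat M,\hat\omega)$, and the passage of the scalar curvature bound $R(\omega_\infty)\geq R_0$ from the local $\mathcal C^\infty$ convergence $\omega_i\to\omega_\infty$ provided by Lemma \ref{lcappr}(3). Completeness of $(M_\infty,\omega_\infty)$ follows from Hopf--Rinow once one knows that $d_\infty$ agrees with the distance induced by $\omega_\infty$, which is part of Proposition \ref{regularity}, and the fact that $(X_\infty,d_\infty)$ is boundedly compact.

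The main obstacle I anticipate is the careful bookkeeping of the localization in Section \ref{chcm}: verifying that the constants $C_0,K_0,C,r$ of \eqref{defconstants} and \eqref{defconstants2} controlling the adapted charts can be chosen to depend only on a fixed relatively compact neighborhood of $S\cup\phi_\infty(\bar B(p_\infty,r))$ in $\hat M$, and that the successive subsequence extractions (first for Gromov--Hausdorff convergence on each radius, then for $\phi_i$, and finally for the local chart convergence in Lemma \ref{lcappr}) can all be threaded through one diagonal argument so that every limit exists along the same subsequence. Beyond this, the argument is a line-by-line repetition of the proof of Theorem \ref{cptness2}.
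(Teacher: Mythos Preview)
Your proposal is correct and follows essentially the same route as the paper: localize all ambient-space estimates to the compact sets $\bar B(S,r)$ via the $1$-Lipschitz property of $\phi_i$, then rerun Sections~\ref{chcm}--\ref{secfour} verbatim. The only packaging difference is that the paper records your ``Proposition~\ref{Grmv} on each ball plus diagonal extraction'' step as a separate generalized precompactness lemma (Proposition~\ref{gGrmv}), phrased for Ricci curvature bounded below by a non-increasing function of the distance to the basepoint; this formulation also absorbs the further relaxation in which $R_0$ itself is such a function, the only extra bookkeeping being to replace $R_0$ by $R_0(r_0)$ when invoking Proposition~\ref{lccs}.
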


Noting that $\sup_{\bar B(p,r)} R(\omega)$ is increasing, it is natural to require $R_0$ to be non-increasing, so that the condition $R(\omega)\geq R_0\circ\rho$ is equivalent to $\inf_{\bar B(p,r)} R(\omega)\geq R_0(r)$ for any $r\geq 0$.

The following example show that  $R(\omega_i)$ could  be  not uniformly bounded from below by a  constant $R_0$ but each $R(\omega_i)$ is bounded by $2n(n+1)K\circ\rho_i$ for some non-increasing function $K:[0,\infty)\rightarrow \mathbb R$.

\begin{eg} Let $f$ be an entire holomorphic function on $\mathbb C$ with $f(0)=0$. Assume the Taylor series of $f$ at $0$ is $f(z)=\sum\limits_{i=1}^\infty a_iz^i$. For each $i=1,2,\cdots$, consider
\begin{equation}
f_i(z)=\sum_{j=1}^ia_jz^j,\qquad \omega_i={\frac{\iu}{2}}\pbp(|z|^2+|f_i|^2),\qquad \phi_i(z)=(z,f_i(z)).
\end{equation}
Then when $i\rightarrow \infty$, $\omega_i$ locally uniformly $\mathcal C^{\infty}$ converges to $\omega_{\infty}\triangleq\omega_i={\textstyle\frac{\iu}{2}}\pbp(|z|^2+|f|^2)$ and $\phi_i$  locally uniformly $\mathcal C^{\infty}$ converges to $\phi_{\infty}(z)\triangleq(z,f(z))$. For some $f$, for example $f(z)=e^{z^2}-1$, $\inf_{\mathbb C} R(\omega_\infty)=-\infty$. In this case, clearly $\inf_{i\geq 1}\inf_{\mathbb C}R(\omega_i)=-\infty$ . However, we always have
\begin{equation}
\frac{1}{4}R(\omega_i)\geq -|f''|^2\geq K(|z|)\triangleq -\sum_{j=2}^\infty j(j-1)|a_j||z|^{2(j-2)},
\end{equation}
which implies $R(\omega_i)\geq K\circ\rho_i$, where $\rho_i$ is the distance function to $0$ on $(\mathbb C,\omega_i)$.
\end{eg}

The proof of Theorem \ref{cptness1} follows the same idea of Theorem 
\ref{cptness2}.

First we need the following generalization of Gromov's precompactness Theorem for complete Riemannian manifolds with Ricci curvature bounded from below, which is likely well known to the experts but we add a proof for reader's convenience:
\begin{prop}\label{gGrmv}
Let $\{(M_i,g_i,p_i)\}$ be a sequence of  pointed complete connected Riemannian manifold of dimension $n\geq 2$. If there is a non- increasing function $R_0:[0,\infty)\rightarrow\mathbb R$, such that
\begin{equation}
Ric(g_i)\geq (n-1)(R_0\circ\rho_i)g_i,
\end{equation}
for each $i$, where $\rho_{i}$ is the distance function to $p_i$. Then there exists a subsequence of $\{(M_i,g_i,p_i)\}$ which converges in the Gromov-Hausdorff sense to a pointed boundedly compact length space.
\end{prop}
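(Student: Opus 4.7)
My plan is to localize the hypothesis in order to reduce to the classical Gromov precompactness theorem (Proposition \ref{Grmv}) via a Bishop--Gromov packing argument on each bounded ball, followed by a diagonal subsequence argument. First, fix $R > 0$. Because $R_0$ is non-increasing, every $q \in \bar B(p_i, 3R)$ satisfies $R_0(\rho_i(q)) \geq R_0(3R)$, so
\begin{equation}
Ric(g_i) \geq (n-1) R_0(3R) g_i \quad \text{on } \bar B(p_i, 3R),
\end{equation}
which is exactly the constant Ricci lower bound needed to invoke Bishop--Gromov comparison on this region (replacing $R_0(3R)$ by $\min(R_0(3R),0)$ if one wishes to avoid space-form radius restrictions when $R_0(3R)>0$).

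Next, I would show that for each $R > 0$ the sequence $\{(\bar B(p_i, R), d_{g_i}, p_i)\}$ is uniformly totally bounded. Given $\varepsilon \in (0, R)$ and a maximal $\varepsilon$-separated subset $\{q_1, \ldots, q_N\} \subset \bar B(p_i, R)$, the open balls $B(q_j, \varepsilon/2)$ are pairwise disjoint and contained in $B(p_i, 2R) \subset \bar B(p_i, 3R)$. Setting $K := R_0(3R)$ and applying the relative Bishop--Gromov volume comparison at each $q_j$ (using that $B(q_j, 2R) \subset \bar B(p_i, 3R)$, where the Ricci bound holds, and that $B(q_j, 2R) \supset B(p_i, R)$), together with the relative comparison at $p_i$, yields
\begin{equation}
\Vol(B(q_j, \varepsilon/2)) \geq \frac{V_K(\varepsilon/2) V_K(R)}{V_K(2R)^2} \Vol(B(p_i, 2R)),
\end{equation}
where $V_K(s)$ is the volume of a ball of radius $s$ in the $n$-dimensional space form of constant Ricci curvature $(n-1)K$. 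Summing over the $N$ disjoint balls inside $B(p_i, 2R)$ then gives
\begin{equation}
N \leq \frac{V_K(2R)^2}{V_K(\varepsilon/2) V_K(R)},
\end{equation}
a bound independent of $i$. Total boundedness is thereby established, whence the sequence of $R$-balls is precompact in pointed Gromov--Hausdorff distance.

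Finally, I would diagonalize. For each positive integer $k$, apply the previous step with $R = k$ and refine iteratively to obtain nested subsequences along which $\{(\bar B(p_i, k), d_{g_i}, p_i)\}$ converges in pointed Gromov--Hausdorff distance to a compact pointed length space, with the natural nested isometric embeddings between levels. The Cantor diagonal then yields a subsequence of $\{(M_i, g_i, p_i)\}$ converging in the pointed Gromov--Hausdorff sense to a pointed length space $(X_\infty, d_\infty, p_\infty)$ whose closed $k$-ball is the limit of $\bar B(p_i, k)$ for each $k$; in particular $(X_\infty, d_\infty)$ is boundedly compact. The main technical point is the bookkeeping in the middle step: the buffer factor $3$ is chosen precisely so that every comparison ball appearing in the Bishop--Gromov argument lies inside the region $\bar B(p_i, 3R)$ where the constant Ricci lower bound is valid, and so that the chain of inequalities $B(q_j, 2R) \supset B(p_i, R)$ and $B(q_j, \varepsilon/2) \subset B(p_i, 2R)$ can be combined without losing uniformity.
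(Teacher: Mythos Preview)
Your argument is correct and follows essentially the same strategy as the paper's proof: use the non-increasingness of $R_0$ to obtain a constant Ricci lower bound on an enlarged ball around each $p_i$, then apply a Bishop--Gromov packing estimate to get uniform total boundedness of the balls $\bar B(p_i,R)$, and conclude pointed Gromov--Hausdorff precompactness. The paper is simply more terse, invoking \cite[Theorem~8.1.10]{BBI01} and the packing bound from \cite[end of proof of 5.3]{Grmv81} as black boxes (with buffer factor $2$ and the single estimate $N=\lceil V(n,R_0(2r),2r)/V(n,R_0(2r),\varepsilon)\rceil$), whereas you spell out the packing and diagonalization explicitly with buffer factor $3$ and a double application of Bishop--Gromov.
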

\begin{proof}
By \cite[Theorem 8.1.10]{BBI01} (the idea is similar to \cite[Proposition 5.2]{Grmv81}), we only need to show that for each $r>0$ and $\varepsilon>0$ , there exists a positive integer $N=N(r,\varepsilon)$, such for each $i$, there is a finite set  $S_i\subset\bar B(p_i,r)$ consisting of at most $N$ points and satisfying $B(S_i,\varepsilon)\supset \bar B(p_i,r)$. The condition on the Ricci curvature indicates that $Ric(g_i)\geq (n-1)R_0(2r)$ on $\bar B(p_i 2r)$. According to \cite[End of the proof of 5.3]{Grmv81}, we can choose
\begin{equation}
N=\left\lceil\frac{V(n,R_0(2r),2r)}{V(n,R_0(2r),\varepsilon)}\right\rceil,
\end{equation}
where $\lceil\cdot \rceil$ is the ceil function and $V(n,\kappa,t)$ is the volume of a geodesic ball of radius $t$ in a simply connected space form of dimension $n$ and sectioncal curvature $\kappa$.
\end{proof}

Secondly,  by Lemma \ref{rclb}, for any $(M,\omega,p,\phi)\in\mathcal K(n,R_0,\hat M,\hat \omega)$ we can find a non-increasing function $R_0:[0,\infty)\rightarrow \mathbb R$ such that $Ric(\omega)\geq 2n(n+1)(R_0\circ\rho)\omega$. Thus under the condition of Theorem \ref{cptness1} and the compactness of $S$, we can apply Proposition \ref{gGrmv} and Lemma \ref{AA} to  find a subsequence of $\{(M_i,\omega_i,p_i,\phi_i)\}$ converging to some $(X_\infty,d_\infty,p_\infty,\phi_\infty)$ which consists of a pointed boundedly compact length space $(X_\infty,d_\infty,p_\infty)$ and a $1$-Lipschitz map $\phi$ with $\phi_\infty(p_\infty)\in S$.
All the remaining proofs follow the same arguments with the following adaptations:
 (1) when choosing the constant $C$ in \eqref{defconstants2}, we use $R_0(r_0)$ instead of $R_0$; (2) when verifying the condition of  Proposition \ref{lccs}, we have $\inf_{B(q_{l_k},8r)}R(\omega_{l_k})\geq R_0(r_0)$ instead of  $\inf_{B(q_{l_k},8r)}R(\omega_{l_k})\geq R_0$.
 By doing this we get the same conclusion as in Theorem \ref{cptness2}, replacing the constant $R_0$ with a function of $\rho$, that we still denote by $R_0$.

\end{document}